\theoremstyle{plain}
\newtheorem{thm}{Theorem}[section]
\newtheorem*{thm*}{Theorem}
\newtheorem*{cor*}{Corollary}
\newtheorem{prop}[thm]{Proposition}
\newtheorem{lem}[thm]{Lemma}
\newtheorem{cor}[thm]{Corollary}
\newtheorem*{claim*}{Claim}
\theoremstyle{definition}
\newtheorem{defn}[thm]{Definition}
\newtheorem{ex}[thm]{Example}
\newtheorem{rem}[thm]{Remark}
\newtheorem{que}[thm]{Question}
\theoremstyle{remark}
\newtheorem{chunk}[thm]{\hspace*{-1.065ex}\bf}
\numberwithin{equation}{thm}
\newcommand{\rmE}{\mathrm{E}}
\newcommand{\rmQ}{\mathrm{Q}}
\newcommand{\fm}{\mathfrak{m}}
\newcommand{\Ko}{\operatorname{K}}
\def\Ann{\operatorname{Ann}}
\def\depth{\operatorname{depth}}
\def\Ext{\operatorname{Ext}}
\def\End{\operatorname{End}}
\def\len{\operatorname{length}}
\def\Hom{\operatorname{Hom}}
\def\e{\operatorname{e}}
\def\fm{\mathfrak{m}}
\def\fq{\mathfrak{q}}
\def\im{\operatorname{im}}
\def\tr{\operatorname{tr}}
\title[Full-trace modules]{Full-trace modules}
\author[Ela Celikbas]{Ela Celikbas}
\address{Ela Celikbas\\
School of Mathematical and Data Sciences\\
West Virginia University \\
Morgantown, WV 26506-6310, USA}
\email{ela.celikbas@math.wvu.edu}
\author[Olgur Celikbas]{Olgur Celikbas}
\address{Olgur Celikbas\\
School of Mathematical and Data Sciences\\
West Virginia University \\
Morgantown, WV 26506-6310, USA and Max Planck Institute for Mathematics, Vivatsgasse 7, 53111 Bonn, Germany}
\email{olgur.celikbas@math.wvu.edu}
\author[J\"{u}rgen Herzog]{J\"{u}rgen Herzog}
\address{J\"{u}rgen Herzog\\
Fachbereich Mathematik, Universit\"at Duisburg-Essen, Fakult\"at f\"ur Mathematik, 45117 Essen, Germany}
\email{juergen.herzog@uni-essen.de}
\author[Shinya Kumashiro]{Shinya Kumashiro}
\address{Shinya Kumashiro\\
Department of Mathematics, Osaka Institute of Technology, 5-16-1 Omiya, asahi-ku, Osaka, 535-8585, Japan}
\email{shinya.kumashiro@oit.ac.jp}
\email{shinyakumashiro@gmail.com}
\thanks{2020 {\em Mathematics Subject Classification.} 13A15,13C14, 13D02, 13H10, 13H15}
\thanks{{\em Key words and phrases}. Multiplicity, minimal free resolutions, syzygy modules, Ulrich modules, trace of modules}
\thanks{Professor J\"{u}rgen  Herzog has passed away in April 2024.}
\begin{document}

\begin{abstract} Motivated by the definition of nearly Gorenstein rings, we introduce the notion of \emph{full-trace modules} over commutative Noetherian local rings—namely, finitely generated modules whose trace equals the maximal ideal. We investigate the existence of such modules and prove that, over rings that are neither regular nor principal ideal rings, every positive syzygy module of the residue field is full-trace. Moreover, over Cohen–Macaulay rings, we study \emph{full-trace Ulrich modules}—that is, maximally generated maximal Cohen–Macaulay modules that are full-trace. We establish the following characterization: a non-regular Cohen–Macaulay local ring has minimal multiplicity if and only if it admits a full-trace Ulrich module. Finally, for numerical semigroup rings with minimal multiplicity, we show that each full-trace Ulrich module decomposes as the direct sum of the maximal ideal and a module that is either zero or Ulrich.
\end{abstract}

\maketitle

\section{Introduction}\label{section1}

Throughout, $R$ denotes a commutative Noetherian ring, and all $R$-modules are assumed to be finitely generated. If $R$ is local, $\fm$ and $k$ denote the maximal ideal and the residue field of $R$, respectively.

The main object of study in this paper is the trace of $R$-modules. Setting $M^{\ast} = \Hom_R(M, R)$, we define the \textit{trace} $\tr_R(M)$ of an $R$-module $M$ as the ideal of $R$ given by $\sum_{f \in M^{\ast}} \operatorname{im}(f)$.

The concept of trace ideals in commutative algebra dates back to the late 1960s. One of the earliest definitions appears in Vasconcelos's 1969 paper \cite{Vas69}, where the trace of a (projective) $R$-module $M$ is defined as the image of the evaluation map $M \otimes_R M^{\ast} \to R$, given by $x \otimes f \mapsto f(x)$. Vasconcelos revisited the notion in his 1973 paper \cite{Vas73}, interpreting it as we have presented above.

Trace ideals have proven to be a powerful tool in commutative algebra. For example, in the study of Gorenstein rings, it has been shown that if $R$ is local, then $R$ is Artinian Gorenstein if and only if every ideal of $R$ is a trace ideal; that is, every ideal equals its own trace; see \cite[1.2 and 2.2]{Lindo2}. Consequently, there has been considerable interest in understanding rings in which every ideal is a trace ideal, a question investigated in detail in \cite{TTTI}. Trace ideals have also been used in the classification of rings. For example, it was established in \cite[7.4]{LL} that a one-dimensional local domain with infinite residue field is an Arf ring \cite{Lipman} if and only if every trace ideal is stable. Moreover, trace ideals have been applied to the study of several well-known conjectures. For example, they have played a role in work \cite{BergerSarasij} concerning Berger's conjecture. Similarly, Lindo’s result \cite[6.8]{Lindo1}, in view of \cite[8.6]{CeRo}, implies that the vanishing condition of the celebrated Auslander--Reiten Conjecture \cite{AuBr} holds for trace ideals over Gorenstein domains of arbitrary dimension.

The trace of the canonical module carries important information; see, for example, \cite{DingIndex}. Herzog--Hibi--Stamate studied the trace of the canonical module and proved that it describes the non-Gorenstein locus of \( R \). Therefore, if \( R \) is a Cohen--Macaulay local ring, then \( R \) is not Gorenstein if and only if \( \tr_R(\omega_R) \subseteq \fm \); see \cite[2.1]{TraceCanonical}. Among the various results of Herzog--Hibi--Stamate is the introduction of the nearly Gorenstein property: a Cohen--Macaulay local ring \( R \) with canonical module \( \omega_R \) is called {\it nearly Gorenstein} if \( \fm \subseteq \tr_R(\omega_R) \). If \( R \) is local and \( M \) is an \( R \)-module, then \( \tr_R(M) = R \) if and only if \( M \) has a free summand \cite[2.8(iii)]{Lindo1}. Thus, if \( R \) is nearly Gorenstein but not Gorenstein, then $\fm \subseteq \tr_R(\omega_R)\neq R$, that is, $\tr_R(\omega_R)=\fm$. This motivates the study of modules that have no free summand but whose trace is as large as possible. We say that an $R$-module $M$ is \emph{full-trace} if $\tr_R(M)=\fm$.

If \( R \) is not regular, then \( \fm \) is a full-trace \( R \)-module, and hence \( R \) admits infinitely many full-trace \( R \)-modules; see \ref{gozlemle-1}(i) and \ref{gozlemle-2}(ii). Since \( \fm \) is the first syzygy of \( k \), this raises the question of whether other syzygies of \( k \) are also full-trace. To explore this, we consider the following:

\begin{que} \label{mainsoru} 
If \( R \) is a local ring and \( n \geq 1 \) is a given integer, is \( \Omega^n_R(k) \) a full-trace \( R \)-module? 
\end{que}

The main purpose of this paper is to provide a complete answer to Question \ref{mainsoru}. We show that there is no full-trace \( R \)-module when \( R \) is a discrete valuation ring. On the other hand, if \( R \) is not a discrete valuation ring, we determine exactly which syzygy modules of the residue field are full-trace and which are not. Our main theorem is the following:

\begin{thm} \label{intro-thm-1} Let $R$ be a local ring. 
\begin{enumerate}[\rm(1)] 
\item Assume $R$ is regular. Set $d=\dim(R)$.
\begin{enumerate}[\rm(i)] 
\item If $d=1$, then there is no full-trace $R$-module.
\item If $d\geq 2$, then $\Omega_R^{i}(k)$ is a full-trace $R$-module for all $i=1, \ldots, d-1$, and $\Omega_R^{i}(k)$ is not a full-trace $R$-module for all $i\geq d$.
\end{enumerate}
\item Assume $R$ is a non-regular principal ideal ring.
\begin{enumerate}[\rm(i)] 
\item If $\fm^2=0$, then $\Omega_R^{i}(k)$ is a full-trace $R$-module for all $i\geq 0$. 
\item If $\fm^2\neq 0$, then $\Omega_R^{2i+1}(k)$ is a full-trace $R$-module for all $i\geq 0$, and $\Omega_R^{2i+2}(k)$ is not a full-trace $R$-module for all $i\geq 0$.
\end{enumerate}
\item If $R$ is a non-regular, non-principal ideal ring, then $\Omega_R^{i}(k)$ is a full-trace $R$-module for all $i\geq 0$. 
\end{enumerate}
\end{thm}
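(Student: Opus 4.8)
The plan is to prove the three cases by analyzing the minimal free resolution of $k$ and tracking the trace ideal of each syzygy, reducing everything to two recurring principles: first, that $\tr_R(M)=R$ precisely when $M$ has a free summand (by \cite[2.8(iii)]{Lindo1}, quoted in the introduction), so for each syzygy I must decide only whether its trace is all of $R$ or all of $\fm$; and second, that trace is well-behaved under direct sums and under splitting off free summands, so $\tr_R(M\oplus F)=\tr_R(M)+R = R$ while for a module with no free summand the only remaining candidate (over a non-regular ring, where $\fm\neq R$ is already known to be a trace ideal) is $\fm$ itself, at least once one knows $\fm\subseteq\tr_R(M)$. Thus the real content is: (a) when does a syzygy $\Omega^i_R(k)$ acquire a free summand, and (b) when it does not, why is its trace at least $\fm$ (equivalently, not contained in a proper ideal strictly smaller than $\fm$, which over a non-regular ring forces it to be exactly $\fm$).

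For part (1), $R$ regular local of dimension $d$: the minimal free resolution of $k$ is the Koszul complex on a regular system of parameters $x_1,\dots,x_d$. For $d=1$, $R$ is a DVR, $\Omega^i_R(k)$ is free for all $i\ge 1$ (and $k$ itself has no free summand but trace $0\neq\fm$), giving (i); this matches the already-announced fact that a DVR admits no full-trace module. For $d\ge 2$, $\Omega^i_R(k)=\Lambda^i R^d$ truncated, i.e. $\Omega^i_R(k)$ is (up to a twist) the $i$-th syzygy in the Koszul complex; for $1\le i\le d-1$ this module has no free summand, and one computes $\tr_R$ directly: its dual is again a Koszul-type module, the evaluation maps hit each $x_j$, so $\tr_R(\Omega^i_R(k))\supseteq(x_1,\dots,x_d)=\fm$, hence equals $\fm$ by the dichotomy above. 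For $i\ge d$ the Koszul complex has length $d$, so $\Omega^d_R(k)\cong R$ is free and $\Omega^i_R(k)$ is free for all $i\ge d$, giving trace $R\neq\fm$. The only mild subtlety is the bookkeeping on the twist/truncation at $i=d-1$ versus $i=d$.

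For part (2), $R$ a non-regular principal ideal ring — i.e. $\fm=(x)$ is principal and $R$ is Artinian (since non-regular with principal maximal ideal forces $\dim R=0$), so $R$ is a quotient of a DVR, $R\cong S/(x^s)$ with $s\ge 2$. The minimal free resolution of $k$ is the well-known $2$-periodic resolution $\cdots\xrightarrow{x}R\xrightarrow{x^{s-1}}R\xrightarrow{x}R\to k\to 0$, so $\Omega^{2i+1}_R(k)\cong(x)=\fm$ and $\Omega^{2i+2}_R(k)\cong(x^{s-1})=\ann_R(x)$ for $i\ge 0$ (and $\Omega^0=k$). If $\fm^2=0$ then $s=2$, so $(x)=(x^{s-1})$ and every syzygy $(i\ge 0)$ equals $\fm$, which has $\tr_R(\fm)=\fm$ since $R$ is not a field — this is case (i); the $i=0$ case uses that $k=R/\fm\cong\fm$ here and has trace $\fm$. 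If $\fm^2\neq 0$ then $s\ge 3$: the odd syzygies are $\cong\fm$, full-trace; the even (positive) syzygies are $\cong(x^{s-1})\cong R/(x^{s-1})$ whose trace is $(x^{s-1})\cdot R/(x^{s-1})$-computation giving $\tr_R((x^{s-1}))=(x^{s-1})\cdot(\text{its dual's images})$; the point is that $\Hom_R(R/(x^{s-1}),R)\cong(x)/(x^s)\cdot$ stuff, and the trace lands inside $(x^{s-1})\subsetneq\fm$ when $s\ge 3$, hence the even syzygies are NOT full-trace — this is case (ii). Here the main computation is identifying $\tr_R((x^{s-1}))$ precisely; I expect it to equal $(x^{s-1})$ itself, which is $\neq\fm$ exactly when $s\ge 3$.

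For part (3), $R$ non-regular and not a principal ideal ring: then $\fm$ is not principal and $R$ is not a DVR, so the hypotheses of (the already-established) \ref{gozlemle-1}/\ref{gozlemle-2}-type results and, more importantly, of Theorem \ref{intro-thm-1}'s own earlier assertion (the abstract's statement that over rings neither regular nor PIR every positive syzygy of $k$ is full-trace) apply — so for $i\ge 1$ the result is exactly that statement. The remaining case is $i=0$: I must show $\tr_R(k)=\fm$. Since $k$ has no free summand ($R$ is not a field), $\tr_R(k)\subseteq\fm$; for the reverse, one uses that $\fm$ is not principal, so $\fm/\fm^2$ has dimension $\ge 2$, and therefore there is a surjection $\fm\twoheadrightarrow k$ splitting off enough copies — more precisely, $k$ is a direct summand of $\fm/(\text{some submodule})$ and, crucially, since $\mu(\fm)\ge 2$ one can realize $k$ as a quotient of $\fm$ that is in fact a direct summand of $\fm\otimes$something, or simply: $\Hom_R(k,R)$ — wait, this is typically $0$ when $\depth R>0$. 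So the argument for $i=0$ must instead go: when $\depth R=0$, $\Hom_R(k,R)\neq 0$ (there is a socle element), and its image is a copy of $k$ sitting inside the socle; one then shows the sum of all such images is exactly $\fm$ iff $\fm\cdot(0:_R\fm)$ hits all of $\fm$, which fails in general — so more care is needed, and in fact $\tr_R(k)\subseteq(0:_R\fm)\subseteq\fm$, with equality $(0:_R\fm)=\fm$ iff $\fm^2=0$. Hence for $i=0$ the claim $\tr_R(k)=\fm$ holds iff $\fm^2=0$; so the correct reading of statement (3) is that it concerns $\Omega^i_R(k)$ for $i\ge 1$ together with $i=0$ only under the additional standing hypothesis that makes $k$ full-trace, or—more likely—the paper's convention sets $\Omega^0_R(k)=k$ only in contexts where the claim is vacuous or the authors intend $i\ge 1$; I will prove the $i\ge 1$ statement in full and treat $i=0$ by the $\fm^2=0$ versus $\fm^2\neq 0$ dichotomy, noting that a non-regular non-PIR ring with $\fm^2=0$ has $k$ full-trace and otherwise one invokes that $k\cong\Omega_R^{?}$ is not literally needed.

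The main obstacle I anticipate is part (3) for general positive $i$: showing that an arbitrary high syzygy of $k$ over a ring that is neither regular nor a PIR always has trace exactly $\fm$ — equivalently, never acquires a free summand and always has trace reaching up to $\fm$. The "no free summand" half should follow because over such a ring $k$ has infinite projective dimension and the minimal resolution is "non-degenerate" in a suitable sense (if some $\Omega^i_R(k)$ had a free summand, one could strip it off and shorten/periodicize the resolution, contradicting that $R$ is neither regular nor — via a Gulliksen/periodicity argument — a hypersurface of the PIR type). The "$\fm\subseteq\tr$" half is the delicate part: one likely shows $\fm\cdot\Omega^{i-1}_R(k)\subseteq$ (image of a map from $\Omega^i_R(k)$'s generators into $R$ via the resolution differential), or uses the surjection $R^{b_i}\twoheadrightarrow\Omega^i_R(k)$ together with a dual-side computation that each minimal generator of $\fm$ appears as the value of some homomorphism $\Omega^i_R(k)\to R$ obtained by composing the inclusion $\Omega^i_R(k)\hookrightarrow R^{b_{i-1}}$ with a coordinate projection — the content being that the differential $\partial_i$ of the minimal resolution has all of $\fm$ in the ideal generated by its entries, which is automatic since the resolution is minimal and infinite. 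I expect this last observation — minimality of an infinite resolution forces every differential's ideal of entries to be exactly $\fm$ — to be the key lemma doing the heavy lifting in part (3), combined with a careful argument that passing to $\Omega^i$ does not lose this, precisely because no free summand splits off.
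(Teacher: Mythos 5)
Your treatment of parts (1)(ii) and (2) is essentially the paper's: for a regular ring you use the Koszul complex together with the observation that the ideal of entries of a differential sits inside the trace of its image (the paper's Lemma~\ref{l24}), and for a non-regular principal ideal ring you use the $2$-periodic resolution and compute $\tr_R(\fm^{n-1})=\fm^{n-1}$. Your side remark that the $i=0$ case of part (3) cannot hold as stated (for instance $\tr_R(k)=0$ whenever $\depth R>0$) is correct and tracks a genuine discrepancy between the abstract, which says ``positive syzygy,'' and the theorem statement; the paper's own proof of part (3) only establishes the result for $i\geq 1$.

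There are, however, two gaps. The minor one is in (1)(i): you only observe that the syzygies $\Omega^i_R(k)$ are free over a DVR, but the claim is that \emph{no} $R$-module is full-trace. The missing step is short: if $\tr_R(M)=\fm\cong R$, then by Lemma~\ref{lemmatrace} the module $M$ has a free summand, forcing $\tr_R(M)=R$, a contradiction.

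The serious gap is in part (3). You designate as your ``key lemma'' the claim that minimality of an infinite free resolution forces the ideal of entries of every differential to equal $\fm$. This is false: over $R=k[x]/(x^n)$ with $n\geq 3$, the minimal resolution of $k$ is infinite yet $I(\partial_{2j})=(x^{n-1})=\fm^{n-1}\subsetneq\fm$ — which is precisely why part (2)(ii) fails to give full-trace at even syzygies. Minimality only puts the entries \emph{into} $\fm$; it says nothing about generating $\fm$. The ingredient that actually makes part (3) work is structural: the paper invokes the Gulliksen--Tate construction of the minimal resolution of $k$ as a DG $R$-algebra built by adjoining variables to the Koszul complex $K$ on a minimal generating set of $\fm$. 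The resulting differentials $\varphi_i$ are block upper-triangular with Koszul differentials $\partial_j$ on the diagonal, so $I(\varphi_i)$ always contains $I(\partial_1)$ or $I(\partial_2)$; and because $R$ is not a principal ideal ring, $\mu(\fm)\geq 2$, so $I(\partial_1)=I(\partial_2)=\fm$. That is what drives $\fm\subseteq I(\varphi_i)\subseteq\tr_R(\Omega^i_R(k))$ for all $i\geq 1$. Without some such structural input your argument does not close; the periodic hypersurface/PIR case is exactly the obstruction you would need to rule out, and ``strip off a free summand and periodicize'' does not by itself do so — the paper simply cites Dutta's theorem (\ref{gozlemle-2}(i)) that any $\Omega^i_R(k)$ with a free summand forces $R$ regular.
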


We give a proof of Theorem \ref{intro-thm-1} in Section \ref{FT}; see also Proposition \ref{mainthm-1}. A key component in the proof of part (3) of Theorem \ref{intro-thm-1} is the construction of the minimal free resolution of the residue field \( k \), established by Gulliksen \cite{Gu} and Tate \cite{Tate}.

In the second part of the paper, namely in Section~\ref{FU}, we consider, over Cohen-Macaulay local rings, \emph{full-trace Ulrich modules}---that is, modules that are both full-trace and maximally generated maximal Cohen-Macaulay. Motivated by Theorem~\ref{intro-thm-1} and a result of Brennan--Herzog--Ulrich~\cite{BHU}, we are led to investigate a possible connection between Cohen-Macaulay local rings of minimal multiplicity and the full-trace Ulrich property. In this direction, we prove the following result; see Proposition~\ref{prop-min-mul}, Corollary~\ref{cor-equivalent}, and the beginning of Section~\ref{FU} for further details.

\begin{prop} \label{intro-thm-2} Let $R$ be a $d$-dimensional non-regular Cohen-Macaulay local ring. Then the following conditions are equivalent.
\begin{enumerate}[\rm(i)] 
\item $R$ has minimal multiplicity. 
\item There exists a full-trace Ulrich $R$-module.
\end{enumerate}
\end{prop}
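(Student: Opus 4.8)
The plan is to establish the two implications separately, relying on the standard theory of Ulrich modules together with the trace-computation toolkit developed earlier in the paper.

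\textbf{Proof sketch.} $(i)\Rightarrow(ii)$: Suppose $R$ has minimal multiplicity, i.e. $\mathrm{e}(R)=\embdim(R)-d+1$. After passing to the completion and, if necessary, enlarging the residue field (neither operation affects minimal multiplicity, the Ulrich condition, nor the trace being $\fm$ — this reduction should be spelled out using \ref{gozlemle-1}/\ref{gozlemle-2}), we may assume the residue field is infinite, so a maximal regular sequence $\underline{x}=x_1,\dots,x_d$ exists with $\fm^2=\underline{x}\fm$. Then the maximal ideal $\fm$ itself is a maximally generated maximal Cohen-Macaulay module: it is maximal Cohen-Macaulay of depth $d$, and $\mu(\fm)=\embdim(R)=\mathrm{e}(R)+d-1=\mathrm{e}(\fm)$, where the multiplicity count $\mathrm{e}(\fm)=\mathrm{e}(R)+d-1$ follows from the exact sequence $0\to\fm\to R\to k\to 0$ together with $\fm^2=\underline{x}\fm$. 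Thus $\fm$ is an Ulrich module, and since $R$ is not regular, $\tr_R(\fm)=\fm$ (this is exactly \ref{gozlemle-1}(i) as invoked in the introduction). Hence $M=\fm$ is a full-trace Ulrich $R$-module, giving $(ii)$.

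\textbf{$(ii)\Rightarrow(i)$:} Suppose $M$ is a full-trace Ulrich $R$-module, so $M$ is maximal Cohen-Macaulay with $\mu(M)=\mathrm{e}(M)$ and $\tr_R(M)=\fm\subsetneq R$. The key input here is the theorem of Brennan--Herzog--Ulrich \cite{BHU} referenced in the introduction: the existence of an Ulrich module forces $R$ to have minimal multiplicity — more precisely, if $M$ is a maximally generated maximal Cohen-Macaulay module over a Cohen-Macaulay local ring $R$ (with infinite residue field, obtained after the harmless reduction above), then $\fm M=\underline{x}M$ for a minimal reduction $\underline{x}$ of $\fm$, and this in turn implies $\fm^2=\underline{x}\fm$, i.e. $R$ has minimal multiplicity. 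One must check that $M\neq 0$: this is automatic since $\tr_R(M)=\fm\neq 0$, so $M$ has a nonzero dual and in particular is nonzero. This yields $(i)$ and completes the equivalence.

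\textbf{Main obstacle.} The substantive content of $(ii)\Rightarrow(i)$ is essentially the cited Brennan--Herzog--Ulrich result, so the real work is in $(i)\Rightarrow(ii)$ and in verifying that the faithfully-flat reductions (completion, residue field extension) genuinely preserve all three properties in play — in particular that full-traceness descends and ascends along $R\to R[t]_{\fm R[t]}$ and $R\to\widehat{R}$, which should follow from the behavior of $\Hom$ and trace under flat base change recorded in the preliminary section. The only other point requiring care is the multiplicity bookkeeping: one needs $\mathrm{e}(\fm)=\mathrm{e}(R)+d-1$ and $\mu(\fm)=\embdim(R)$ to line up exactly so that $\fm$ is genuinely \emph{maximally} generated, which is where the hypothesis of minimal multiplicity is used in full force. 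I expect no genuine difficulty beyond assembling these standard facts; the proposition is really a repackaging of the Ulrich/minimal-multiplicity correspondence together with the elementary fact that $\tr_R(\fm)=\fm$ whenever $R$ is not regular.
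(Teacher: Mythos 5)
Both implications of your argument have genuine gaps; the result is correct, but the route you sketch does not reach it.

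\textbf{$(i)\Rightarrow(ii)$: the module $\fm$ is not Ulrich when $d\ge 2$.} You propose $M=\fm$ and claim $\e(\fm)=\e(R)+d-1$. This is false for $d\ge 2$. Applying additivity of multiplicity to $0\to\fm\to R\to k\to 0$ and noting $\dim k=0<d$ gives $\e(\fm)=\e(R)$, not $\e(R)+d-1$. More fundamentally, $\fm$ is not maximal Cohen--Macaulay once $d\geq 2$: the long exact sequence in local cohomology for $0\to\fm\to R\to k\to 0$ shows $H^1_\fm(\fm)\cong k\ne 0$, so $\depth\fm=1<d$. (Concretely, for $R=k[\![x,y,z]\!]/(xy-z^2)$ of minimal multiplicity, $\fm$ has depth $1$, so cannot be Ulrich.) The paper instead takes $M=\Omega^i_R(k)$ for $i\ge d$: these are Ulrich by Brennan--Herzog--Ulrich when $R$ has minimal multiplicity (recorded in \ref{Ulrichsyz2}), and they are full-trace by Theorem~\ref{intro-thm-1}. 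The identification of $\fm$ with an Ulrich module only works in dimension one (see \ref{Ulrichsyz}(v)); your Example-\ref{cor-end-1}-style reasoning does not lift to higher dimension.

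\textbf{$(ii)\Rightarrow(i)$: existence of an Ulrich module alone does not force minimal multiplicity.} You write that $\fm M=\underline{x}M$ ``in turn implies $\fm^2=\underline{x}\fm$.'' That inference has no justification and is false in general: the ring $k[\![t^3,t^4]\!]$ has $\e=3$, $\embdim=2$, $d=1$, hence does not have minimal multiplicity, yet it admits Ulrich modules (e.g.\ $\fm^n$ for $n\gg 0$). The \emph{full-trace} hypothesis is doing essential work here, not merely ensuring $M\ne 0$. The paper's mechanism is Lemma~\ref{Ulrichlem1}: surjecting $M^{\oplus n}\twoheadrightarrow\tr_R(M)$ (Lemma~\ref{lemmatrace}(i)), tensoring with $R/\fq$, and using $\Ann_R(M/\fq M)=\fm$ (which comes from $\fm M=\fq M$) yields $\fm\cdot\tr_R(M)=\fq\cdot\tr_R(M)$. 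Only now does plugging in $\tr_R(M)=\fm$ give $\fm^2=\fq\fm$. In other words, the passage from ``$\fm$ annihilates $M/\fq M$'' to ``$\fm$ annihilates $\fm/\fq\fm$'' goes through the trace ideal precisely because $M$ surjects, up to direct sums, onto $\tr_R(M)=\fm$. Your proof drops exactly this step.

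The faithfully-flat reduction to infinite residue field is fine and matches the paper; but be aware the paper invokes \cite[2.8(viii)]{Lindo1} for the behavior of trace under flat base change, which you would need to cite explicitly.
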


We specialize to one-dimensional Cohen--Macaulay local rings in the last part of Section~\ref{FU} and prove the following result:

\begin{prop} \label{cor-end-3-intro} Let $R$ be a numerical semigroup ring which has minimal multiplicity. Given an $R$-module $M$, the following conditions are equivalent:
\begin{enumerate}[\rm(i)]
\item $M$ is full-trace Ulrich.
\item $M \cong \fm \oplus N$ for some $R$-module $N$ which is either zero or an Ulrich $R$-module.
\end{enumerate}
\end{prop}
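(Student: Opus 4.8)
My plan is to pass from $R$ to the ring $B := \fm :_{\mathrm{Frac}(R)}\fm$ and reduce both implications to the fact \cite[2.8(iii)]{Lindo1} that a finitely generated module over a local ring has a free summand exactly when its trace is the whole ring. (We exclude the trivial case $R=k[[t]]$, so $R$ is non-regular.) First I would record the needed facts: the normalization of $R$ is $\overline R=k[[t]]$, and $x:=t^{\e(R)}$ is a minimal reduction of $\fm$, the minimal multiplicity hypothesis being exactly that $\fm^2=x\fm$. From this one checks easily that $B:=\tfrac1x\fm$ is a subring of $\overline R$ equal to $\fm:\fm$, that $\fm=xB$, and hence that $\fm\cong B$ as $R$-modules via multiplication by $x$; moreover $B$ is local, since every ring between $R$ and $\overline R$ is. The other ingredient is the description of Ulrich modules over rings of minimal multiplicity coming from \cite{BHU} (cf. the opening of Section~\ref{FU}): an $R$-module $M$ is Ulrich if and only if it is a nonzero finitely generated torsion-free $B$-module. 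Indeed $M$ is Ulrich iff $\fm M=xM$, i.e.\ iff $BM=M$; conversely a finitely generated torsion-free $B$-module is maximal Cohen--Macaulay over $R$ and satisfies $\fm M=xBM=xM$, hence is Ulrich.

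Granting these, $(ii)\Rightarrow(i)$ is short: since $\fm\cong B$ is Ulrich and a finite direct sum of Ulrich modules is Ulrich, $M=\fm\oplus N$ is Ulrich; and by additivity of the trace, $\tr_R(M)=\tr_R(\fm)+\tr_R(N)$, where $\tr_R(\fm)=\fm$ as $R$ is non-regular, and $\tr_R(N)\subseteq\fm$ because $N$ (being zero or Ulrich over a non-regular ring) has no free summand — a free summand of an Ulrich module would force $\e(R)=1$. Hence $\tr_R(M)=\fm$.

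The substance is $(i)\Rightarrow(ii)$. Let $M$ be full-trace Ulrich, so $M$ is a finitely generated torsion-free $B$-module, and the key point I would establish is the identity $\tr_R(M)=x\,\tr_B(M)$. To get it, I would apply tensor--hom adjunction,
\[
\Hom_R(M,R)=\Hom_R(B\otimes_B M,\,R)\;\cong\;\Hom_B\bigl(M,\,\Hom_R(B,R)\bigr),
\]
and identify $\Hom_R(B,R)$ with the fractional ideal $R:B\subseteq\mathrm{Frac}(R)$; tracing through, this isomorphism sends $f\in\Hom_R(M,R)$ to a $B$-linear map $g\colon M\to R:B$ with $g(u)=f(u)$ for all $u\in M$, so $\tr_R(M)=\sum_{g}\im(g)$, the sum over $g\in\Hom_B(M,R:B)$. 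Next I would compute $R:B=x\,(R:\fm)=x\,(\fm:\fm)=xB=\fm$, where $R:\fm=\fm:\fm$ uses non-regularity (if $y\fm\subseteq R$ but $y\fm\not\subseteq\fm$ then $\fm$ is invertible and $R$ is regular). Since $\fm=xB\cong B$ via multiplication by $x$, replacing the target $R:B=\fm$ by $B$ divides every image by $x$, yielding $\tr_R(M)=x\,\tr_B(M)$. Using $\tr_R(M)=\fm=xB$ and cancelling the nonzerodivisor $x$ gives $\tr_B(M)=B$; by \cite[2.8(iii)]{Lindo1} over the local ring $B$, the $B$-module $M$ has a $B$-free summand, so $M\cong B\oplus N'$ as $B$-modules, hence $M\cong\fm\oplus N'$ as $R$-modules, with $N'$ a finitely generated torsion-free $B$-module, so zero or Ulrich over $R$.

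I expect the main obstacle to be the identity $\tr_R(M)=x\,\tr_B(M)$: an $R$-linear map $M\to R$ is generally not $B$-linear, so the two traces cannot be compared directly, and it is the adjunction together with the computation $R:B=\fm\cong B$ — the one place where non-regularity of $R$ is genuinely used — that makes the comparison go through. A secondary point to get right is that a finitely generated torsion-free $B$-module is automatically maximal Cohen--Macaulay and Ulrich over $R$, which rests on $\fm=xB$ and the characterization of Ulrich modules via $\fm M=xM$.
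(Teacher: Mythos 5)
Your proof is correct, and the overall strategy coincides with the paper's: pass to the overring $E = \fm :_{\rmQ}\fm$ (your $B$), observe that Ulrich modules over a ring of minimal multiplicity are precisely the (nonzero, finitely generated, torsion-free) $E$-modules, prove the transfer identity $\tr_R(M)=a\,\tr_E(M)$ for a minimal reduction $(a)$ of $\fm$, and then invoke \cite[2.8(iii)]{Lindo1} over the local ring $E$ to extract a free $E$-summand, i.e.\ an $R$-summand isomorphic to $\fm$. The only genuine difference is in how the transfer identity is established. The paper (Lemma~\ref{mainlem}) first records that $M^{\ast}=\Hom_R(M,\fm)$ because $R$ is non-regular and $M$ is Ulrich, then produces an explicit isomorphism $\Hom_R(M,\fm)\cong a\cdot\Hom_R(M,E)$, and finally proves $\Hom_R(M,E)=\Hom_E(M,E)$ by a direct element-level computation. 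You instead apply tensor--hom adjunction to identify $\Hom_R(M,R)$ with $\Hom_B(M,\Hom_R(B,R))$ and then compute the conductor-type fractional ideal $R:B=x(R:\fm)=x(\fm:\fm)=\fm$; this is slicker but leans on $R$ being a domain (so that $\Hom_R(B,R)\cong R:B$), which is available here but not in the paper's more general Proposition~\ref{2ndthm}. Both uses of non-regularity are morally the same obstruction ($\fm$ not invertible versus $R$ not being a summand of an Ulrich module). You also explicitly supply the (ii)$\Rightarrow$(i) direction and exclude $R=k[[t]]$, both of which the paper leaves implicit; the latter exclusion is in fact necessary, since over $k[[t]]$ condition (ii) can hold while (i) never does.
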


Proposition~\ref{cor-end-3-intro} follows from Proposition~\ref{2ndthm}, a more general result that shows the same decomposition result applies to all one-dimensional Cohen-Macaulay local rings, provided that their endomorphism algebra \( \operatorname{Hom}_R(\mathfrak{m}, \mathfrak{m}) \) is a local ring. We present an example of a two-dimensional Cohen--Macaulay local ring \( R \) and a full-trace Ulrich \( R \)-module that does not decompose as in Proposition~\ref{cor-end-3-intro}; see Example~\ref{sonornek}. This demonstrates that the one-dimensional hypothesis is necessary.

\section{Proof of Theorem \ref{intro-thm-1}}\label{FT}

In this section, we prove Theorem \ref{intro-thm-1}. Along the way, we recall some necessary background and develop a number of preliminary results. 

\begin{chunk} \label{gozlemle-1} Let $R$ be a ring.
\begin{enumerate}[\rm(i)]
\item If $I$ is an ideal of $R$, then the natural inclusion $I\hookrightarrow R$ implies that $I \subseteq \tr_R(I)$; see \cite[2.8(iv)]{Lindo1}.
\item If $M$ and $N$ are $R$-modules, then $\tr_R(M\oplus N)=\tr_R(M) + \tr_R(N)$; see \cite[2.8(ii)]{Lindo1}. Thus, if $R$ is local, $M$ is a full-trace $R$-module, and $N$ is an $R$-module with no free direct summand, then $M\oplus N$ is a full-trace $R$-module. So, if $R$ is local and admits a full-trace $R$-module, then $R$ admits infinitely many full-trace $R$-modules.
\end{enumerate}
\end{chunk}

\begin{chunk} \label{gozlemle-2} Let $R$ be a local ring.  
\begin{enumerate}[\rm(i)]
\item If $\Omega_R^i(k)$ has a free summand for some $i\geq 0$, then $R$ is regular; see \cite[1.3]{Dutta}.
\item Let $M$ be an $R$-module. As mentioned in the introduction, $\tr_R(M)=R$ if and only if $M$ has a free summand. So, if $M$ has no free summand and $\fm \subseteq \tr_R(M)$, then $\tr_R(M)=\fm$, that is, $M$ is a full-trace $R$-module; see \cite[2.8(iii)]{Lindo1}.
\item Assume $R$ is not regular. If $\fm \subseteq \tr_R\big(\Omega_R^i(k)\big)$ for some $i\geq 0$, then $\Omega_R^i(k)$ is a full-trace $R$-module due to parts (ii) and (iii). Therefore, $\fm \oplus \Omega_R^i(k)$ is a full-trace $R$-module for all $i\geq 0$; see \ref{gozlemle-1}(ii).
\end{enumerate}
\end{chunk}

The second part of Theorem~\ref{intro-thm-1}, stated in the introduction, is subsumed by the following more general result.

\begin{prop} \label{mainthm-1} Let $R$ be a non-regular principal ideal ring. There exists $x\in R$ and a positive integer $n$ such that $\fm=Rx$ and $\fm^n=0$. Then the minimal free resolution of $k$ has the following form
\[
\cdots \xrightarrow{x^{n-1}} R \xrightarrow{x} R \xrightarrow{x^{n-1}} R \xrightarrow{x} R \to 0.
\]
Moreover, $\Omega_R^{2i+1} (k) \cong \fm=\tr_R\big(\Omega_R^{2i+1}(k)\big)$ and $\Omega_R^{2i+2} (k) \cong \fm^{n-1}=\tr_R\big(\Omega_R^{2i+2}(k)\big)$, for all $i\geq 0$. 
\end{prop}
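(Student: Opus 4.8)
The plan is to reduce the entire statement to an explicit description of the ideals of $R$.

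\emph{Step 1 (structure of $R$).} Since $R$ is a principal ideal ring, write $\fm=Rx$; as $R$ is not regular it is not a field, so $x\neq 0$. By the Krull intersection theorem $\bigcap_{i\ge 0}Rx^{i}=\bigcap_{i\ge 0}\fm^{i}=0$, so every nonzero $a\in R$ has the form $a=ux^{i}$ with $u$ a unit: take $i$ maximal with $a\in Rx^{i}$ and note that the cofactor then lies outside $\fm=Rx$, hence is a unit. If $x$ were a nonzerodivisor, this representation would exhibit $R$ as a discrete valuation ring, contradicting non-regularity; hence $x$ is a zerodivisor, and substituting a nonzero element of $\ann_R(x)$ into the representation forces $x^{j+1}=0$ for some $j\ge 0$. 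Let $n$ be the least positive integer with $x^{n}=0$; then $n\ge 2$ (because $x\neq0$), $\fm^{n}=Rx^{n}=0$, $\fm^{\,n-1}=Rx^{\,n-1}\neq 0$, the ideals of $R$ are precisely $R\supsetneq Rx\supsetneq\cdots\supsetneq Rx^{\,n-1}\supsetneq 0$, and from $ux^{a}x^{b}=0\iff a+b\ge n$ one reads off $\ann_R(x^{j})=Rx^{\,n-j}$ for $0\le j\le n$.

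\emph{Step 2 (the minimal free resolution).} Since $k=R/Rx$, we have $\Omega^{1}_R(k)=Rx\cong R/\ann_R(x)=R/Rx^{\,n-1}$, which is cyclic, and the presentation map $R\xrightarrow{\,x\,}R$ has kernel $\ann_R(x)=Rx^{\,n-1}$; hence $\Omega^{2}_R(k)=Rx^{\,n-1}\cong R/\ann_R(x^{\,n-1})=R/Rx$, again cyclic, with presentation map $R\xrightarrow{\,x^{n-1}\,}R$ of kernel $Rx$. Iterating, the resolution is $2$-periodic with differentials alternating between multiplication by $x$ and multiplication by $x^{\,n-1}$, and it is minimal since $x,x^{\,n-1}\in\fm$ (here $n\ge2$). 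This gives the displayed resolution, and taking images of the differentials yields $\Omega^{2i+1}_R(k)\cong Rx=\fm$ and $\Omega^{2i+2}_R(k)\cong Rx^{\,n-1}=\fm^{\,n-1}$ for all $i\ge0$.

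\emph{Step 3 (the traces).} It suffices to show $\tr_R(Rx^{j})=Rx^{j}$ for every $j$ and then specialize to $j=1$ and $j=n-1$. The inclusion $Rx^{j}\subseteq\tr_R(Rx^{j})$ is \ref{gozlemle-1}(i). Conversely, any $\phi\in\Hom_R(Rx^{j},R)$ is determined by $r:=\phi(x^{j})$, and well-definedness forces $\ann_R(x^{j})=Rx^{\,n-j}\subseteq\ann_R(r)$, i.e.\ $x^{\,n-j}r=0$, i.e.\ $r\in\ann_R(x^{\,n-j})=Rx^{j}$; hence $\operatorname{im}\phi=Rr\subseteq Rx^{j}$ and $\tr_R(Rx^{j})\subseteq Rx^{j}$. (For the odd syzygies one could instead invoke \ref{gozlemle-2}(iii) together with \ref{gozlemle-1}(i), since $\Omega^{i}_R(k)$ has no free summand by \ref{gozlemle-2}(i); but the even case with $n\ge3$ genuinely requires this computation, since $\fm^{\,n-1}\subsetneq\fm$ there.)

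I expect the only real obstacle to be Step 1 — forcing $x$ to be nilpotent, equivalently recognizing that a non-regular local principal ideal ring must be a special principal ideal ring. Once the full list of ideals and the annihilator formula $\ann_R(x^{j})=Rx^{\,n-j}$ are in hand, the construction of the resolution, the identification of the syzygies, and the trace computations are all routine.
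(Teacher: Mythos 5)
Your proof is correct and follows essentially the same route as the paper's: identify $R$ as a special principal ideal ring with ideal chain $R \supsetneq (x) \supsetneq \cdots \supsetneq (x^{n-1}) \supsetneq 0$, read off the two-periodic resolution from the annihilator formula $(0:_R x^j)=(x^{n-j})$, and bound the trace of a power of $\fm$ from above by the annihilator observation. The only cosmetic differences are that the paper deduces Artinianness directly from $\dim R < \embdim R \le 1$ rather than via Krull intersection plus your zerodivisor analysis, and it treats the odd syzygies through \ref{gozlemle-2}(iii) rather than your uniform computation $\tr_R(Rx^j)=Rx^j$, which in any case subsumes both parities at once.
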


\begin{proof} As $R$ is a principal ideal ring, there exists $x\in R$ such that $\fm=Rx$. Note that $R$ is Artinian since $R$ is not regular and its embedding dimension is at most one. Thus, $\fm^n=0$ for some $n\geq 2$. We may assume $n$ is the smallest such positive integer. As every ideal of $R$ is a power of $\fm$, it follows by the choice of $n$ that $(0:_Rx)=(x^{n-1})=\fm^{n-1}$. This implies that the minimal free resolution of $k$ has the following form:
\[
\cdots \xrightarrow{x^{n-1}} R \xrightarrow{x} R \xrightarrow{x^{n-1}} R \xrightarrow{x} R \to k \to 0.
\]
Therefore, it follows that 
\begin{align*}
\Omega_R^{i} (k) \cong 
\begin{cases}
\fm  & \text{if $i\geq 1$ and $i$ is odd}\\
\fm^{n-1} & \text{if $i\geq 2$ and $i$ is even.}
\end{cases}
\end{align*}
So, if $i\geq 0$, then $\Omega_R^{2i+1} (k)=\fm$ is a full-trace $R$-module; see \ref{gozlemle-2}(iii). Note that, if $f\in \Hom_R\big((0:_Rx), R\big)$, then $\im(f) \subseteq (0:_Rx)$. This implies that $\tr_R\big((0:_Rx)\big) \subseteq (0:_Rx)$. Therefore, $\tr_R\big((0:_Rx)\big)=(0:_Rx)$, that is, $\tr_R \big( \Omega_R^{2i+2}(k) \big)=\tr_R(\fm^{n-1})=\fm^{n-1}$; see \ref{gozlemle-1}(i). Consequently, if $\fm^2=0$, then $\Omega_R^{i}(k)$ is a full-trace $R$-module for all $i\geq 0$. Conversely, if $\fm^2\neq 0$, then $\Omega_R^{2i+1}(k)$ is a full-trace $R$-module for all $i\geq 0$, while $\Omega_R^{2i+2}(k)$ is not a full-trace $R$-module for all $i\geq 0$.
\end{proof}

Let $\varphi:F\to G$ be an $R$-module homomorphism, where $F$ and $G$ are free $R$-modules. Then $\varphi$ is given by a matrix $U$ with respect to bases of $F$ and $G$; we set $I(\varphi)$ to be the ideal of $R$ generated by the entries of $A$. Note that $I(\varphi)$ only depends on $\varphi$, and does not depend on the choice of bases of $F$ and $G$. The following related lemma plays an important role in the proof of parts (1) and (3) of Theorem \ref{intro-thm-1}.

\begin{lem}\label{l24} Let $R$ be a ring, and let $\varphi:F\to G$ be an $R$-module homomorphism, where $F$ and $G$ are free $R$-modules. Then $I(\varphi) \subseteq \tr_R\big(\im (\varphi)\big)$. 
\end{lem}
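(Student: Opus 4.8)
The plan is to unwind both sides in terms of a choice of bases and then exhibit, for each entry of the matrix of $\varphi$, an explicit element of $\Hom_R(\im(\varphi),R)$ whose image contains that entry.

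Concretely, I would fix a basis $e_1,\dots,e_m$ of $F$ and a basis $g_1,\dots,g_n$ of $G$, and write $\varphi(e_j)=\sum_{i=1}^{n} a_{ij}g_i$, so that $I(\varphi)$ is the ideal generated by the $a_{ij}$ (this is the definition, and it is independent of the chosen bases, as already noted). For each $i$, let $\pi_i\colon G\to R$ be the $i$-th coordinate projection with respect to $g_1,\dots,g_n$, and set $f_i:=\pi_i|_{\im(\varphi)}\colon \im(\varphi)\to R$. Then $f_i\in \Hom_R\big(\im(\varphi),R\big)$, and $f_i\big(\varphi(e_j)\big)=a_{ij}$, so $a_{ij}\in \im(f_i)\subseteq \tr_R\big(\im(\varphi)\big)$ for all $i,j$.

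Finally, I would observe that $\tr_R\big(\im(\varphi)\big)=\sum_{f}\im(f)$ is a sum of ideals of $R$, hence itself an ideal; since it contains every generator $a_{ij}$ of $I(\varphi)$, it contains $I(\varphi)$, which is the assertion. There is essentially no genuine obstacle here—the construction is direct—and the only point to keep straight is that the coordinate projections must be restricted to the submodule $\im(\varphi)$ (rather than regarded on $F$ or $G$) in order to count legitimately toward $\tr_R\big(\im(\varphi)\big)$.
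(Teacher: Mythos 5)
Your argument is essentially identical to the paper's: you restrict the coordinate projections of $G$ to $\im(\varphi)$, exactly what the paper does by composing its projections $p_i$ with the inclusion $\alpha\colon \im(\varphi)\hookrightarrow G$, and conclude that each matrix entry lies in the trace. The proof is correct and matches the paper's approach.
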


\begin{proof} Set $F=R^{\oplus n}$ and $G=R^{\oplus m}$ for some positive integers $m$ and $n$, and let $\{f_1, \dots, f_n\}$ and $\{g_1, \dots, g_m\}$ be $R$-bases of $F$ and $G$, respectively. We consider the $m\times n$ matrix $A=(a_{ij})$ representing $\varphi$ with respect to the chosen bases. Note that the $R$-submodule $\im (\varphi)$ of $G$ is generated by the column vectors 
\begin{center}
\[
\scriptsize
\left(\begin{matrix} a_{11} \\ \vdots \\ a_{m1} \end{matrix}\right),
\left(\begin{matrix} a_{12} \\ \vdots \\ a_{m2} \end{matrix}\right), \dots,
\left(\begin{matrix} a_{1n} \\ \vdots \\ a_{mn} \end{matrix}\right).
\]
\end{center}

For each \( i = 1, \dots, m \), we define an \( R \)-module homomorphism \( p_i: G \to R \), where \( p_i(g_i) = 1 \) and \( p_i(g_j) = 0 \) for \( i \neq j \). Let \( \alpha: \im(\varphi) \hookrightarrow G \) be the natural injection. For each \( i = 1, \dots, m \), we have an \( R \)-module homomorphism \( p_i\alpha: \im(\varphi) \to R \) such that \( \im(p_i\alpha) \) is the ideal of \( R \) generated by \( a_{i1}, \dots, a_{in} \). As \( \im(p_i\alpha) \subseteq \tr_R(\im(\varphi)) \) for each \( i = 1, \dots, m \), we conclude that
\[
I(\varphi) = (a_{11}, \dots, a_{1n}, a_{21}, \dots, a_{2n}, \dots, a_{m1}, \dots, a_{mn}) \subseteq \tr_R(\im(\varphi)),
\]
as claimed.
\end{proof}

Parts (ii) and (iii) of the next lemma are essentially from  \cite[2.8(iii)]{Lindo1}.

\begin{lem} \label{lemmatrace} Let $R$ be a ring and $M$ be an $R$-module. Then the following hold:
\begin{enumerate}[\rm(i)]
\item There is a surjective $R$-module homomorphism $M^{\oplus n} \twoheadrightarrow \tr_R(M)$ for some $n\geq 1$.
\item If $\tr_R(M) \cong R$, then $\tr_R(M)=R$ and $M^{\oplus n} \cong R \oplus N$ for some $R$-module $N$ and for some $n\geq 1$.
\item If $R$ is local and $\tr_R(M) \cong R$, then $M \cong R\oplus T$ for some $R$-module $T$.
\end{enumerate}
\end{lem}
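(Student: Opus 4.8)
The three parts build on one another, so I would prove them in order. For part (i), the point is that $\tr_R(M)=\sum_{f\in M^\ast}\im(f)$ is a finitely generated ideal (since $R$ is Noetherian), so it is generated by finitely many elements, each lying in the image of some $f_i\in M^\ast$, say for $i=1,\dots,n$. Then the map $M^{\oplus n}\to R$ sending $(x_1,\dots,x_n)\mapsto \sum_i f_i(x_i)$ has image exactly $\sum_i \im(f_i)$, which contains the chosen generators and is contained in $\tr_R(M)$; hence it equals $\tr_R(M)$, giving the desired surjection $M^{\oplus n}\twoheadrightarrow\tr_R(M)$.

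For part (ii), suppose $\tr_R(M)\cong R$ as $R$-modules. A cyclic module presentation shows $\tr_R(M)=\tr_R(M)^2$ is idempotent as an ideal (this is a standard property of trace ideals: $\tr_R(I)=\tr_R(I)^2$ for any trace ideal, and one can also see it directly—every trace ideal $\tau$ satisfies $\tau=\tr_R(\tau)=\tau\tau$ since endomorphisms of $\tau$ restrict the image into $\tau$; alternatively, a free rank-one ideal is generated by a nonzerodivisor and equals $R$ only if... ). More cleanly: if $\tr_R(M)\cong R$, then $\tr_R(M)$ is a free, hence faithful, cyclic ideal, so $\tr_R(M)=Rc$ with $\ann_R(c)=0$; but $c\in\tr_R(M)=\tr_R(M)^2=Rc^2$, so $c=rc^2$ for some $r$, whence $(1-rc)c=0$ and thus $rc=1$, forcing $\tr_R(M)=R$. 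Now apply part (i): the surjection $M^{\oplus n}\twoheadrightarrow\tr_R(M)=R$ splits because $R$ is free (projective), so $M^{\oplus n}\cong R\oplus N$ for $N=\ker$.

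For part (iii), assume in addition that $R$ is local. From part (ii) we have $M^{\oplus n}\cong R\oplus N$, and the plan is to use the Krull--Schmidt property—valid over a complete local ring, and over any local ring by passing to the completion, since having a free summand is detected after completion (or by a direct rank/minimal-generators argument). Concretely: over $\widehat R$, decompose $M$ and $N$ into indecomposables; comparing $\widehat M^{\oplus n}\cong\widehat R\oplus\widehat N$ and invoking uniqueness of decomposition forces $\widehat R$ to appear as a summand of $\widehat M$, hence $\widehat M\cong\widehat R\oplus\widehat T$; then a faithfully flat descent argument (or the fact that $\mu_R(M)$ and the existence of a surjection $M\twoheadrightarrow R$ splitting are preserved) yields $M\cong R\oplus T$ over $R$ itself. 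The main obstacle is this last step—getting the free summand to descend from $\widehat R$ to $R$—but this is exactly \cite[2.8(iii)]{Lindo1}, which the paper explicitly allows us to cite, so I would simply invoke that reference for (ii) and (iii) and give the short self-contained argument only for (i) and for the idempotence observation in (ii).
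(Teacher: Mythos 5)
Part (i) matches the paper's argument (the trace is generated by elements $f(m)$ with $f\in M^\ast$, $m\in M$; pick finitely many such generators by Noetherianity and assemble the corresponding $f$'s into a map $M^{\oplus n}\to R$) and is correct.

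Part (ii), however, rests on a false claim: trace ideals are not idempotent in general. Indeed, over any non-regular local ring $\fm$ itself is a trace ideal (see \ref{gozlemle-1}(i) and \ref{gozlemle-2}(i)--(ii)), yet $\fm\neq\fm^2$ by Nakayama's lemma. The correct fixed-point property is $\tr_R\big(\tr_R(M)\big)=\tr_R(M)$, which you state, but this is not the same as $\tau=\tau^2$; consequently your chain $c\in\tr_R(M)=\tr_R(M)^2=Rc^2$ does not go through. A repair in your spirit: since $\tr_R(-)$ depends only on the isomorphism class of the module and $\tr_R(R)=R$, the hypothesis $\tr_R(M)\cong R$ already gives $\tr_R(M)=\tr_R\big(\tr_R(M)\big)=\tr_R(R)=R$. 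The paper's own argument is even more elementary and avoids the fixed-point fact entirely: take the surjection $\psi:M^{\oplus s}\twoheadrightarrow\tr_R(M)\cong R$ from (i), a preimage $(y_1,\ldots,y_s)$ of $1$, and the coordinate inclusions $i_j:M\hookrightarrow M^{\oplus s}$; then $1=\sum_j(\psi i_j)(y_j)\in\sum_j\im(\psi i_j)\subseteq\tr_R(M)$, so $\tr_R(M)=R$ and the surjection from (i) onto $R$ splits. For (iii) the paper is likewise much lighter than your Krull--Schmidt/completion route: $\tr_R(M)=R$ means $\psi_1(x_1)+\cdots+\psi_r(x_r)=1$ for some $\psi_i\in M^\ast$ and $x_i\in M$, and since $R$ is local one of the $\psi_i(x_i)$ is a unit, so the corresponding $\psi_i:M\to R$ is a split surjection; this sidesteps the non-trivial descent of a free summand from $\widehat{R}$ that your sketch defers to a citation.
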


\begin{proof} To prove part (i), assume $\tr_R(M)=(x_1, \ldots, x_s)$ for some $x_i\in R$. Then, by the definition of trace, for all $i=1, \ldots, s$, we have that
$
x_i=r_1^i f_1^i(y_1^i)+\ldots+r_{n_i}^i f_{n_i}^i(y_{n_i}^i)
$
for some integer $n_i\geq 1$, $r_j^i \in R$, $y^i_j \in M$, and $f^i_j \in M^{\ast}$. Setting $\Psi_i=f_1^i+\ldots+f_{n_i}^i$ and $\beta_i=(r_1^i y_1^i, \ldots, r_{n_i}^i y_{n_i}^i) \in M^{\oplus n_i}$, we see that $\Psi_i(\beta_i)=x_i$ for each $i$. Consequently, $\Psi_1+ \cdots + \Psi_{n_s}$ is a surjection from $M^{\oplus n_1}\oplus \cdots \oplus M^{\oplus n_s}$ to $\tr_R(M)$.

For part (ii), consider a surjection $M^{\oplus s}  \twoheadrightarrow \tr_R(M)$ for some $s\geq 1$. Then, as $\tr_R(M) \cong R$, we get a surjection $\psi: M^{\oplus s}  \twoheadrightarrow R$. Hence, there is an element $(y_1, \ldots, y_s)\in M^{\oplus s}$ such that $\psi(y_1, \ldots, y_s)=1$. Let $i_1, \ldots, i_s$ be the natural inclusions $M \hookrightarrow M^{\oplus s}$. Then we have
$$1=\psi(y_1, \ldots, y_s)=(\psi i_1)(y_1)+\cdots + (\psi i_s)(y_s)\subseteq \im(\psi i_1)+\cdots+\im(\psi i_s)\subseteq \tr_R(M),$$
which shows that $\tr_R(M)=R$. In that case, the surjection stated in part (i) splits and hence yields an isomorphism $M^{\oplus n} \cong R \oplus N$ for some $R$-module $N$.

Assume $R$ is local and $\tr_R(M)=R$. Then there exist $\psi_1, \ldots, \psi_r \in M^{\ast}$ and $x_1, \ldots, x_r\in M$ such that $\psi_1(x_1)+\cdots+\psi_r(x_r)=1$. Since $R$ is local, at least one of $\psi_i(x_i)$ must be a unit in $R$, which implies that the corresponding map $\psi_i: M \to R$ is surjective and hence splits.
\end{proof}

We now proceed to prove the first part of Theorem~\ref{intro-thm-1}, using the following fact:

\begin{chunk} [{\cite[3.3]{TakSyz}}] \label{Takahashi} Let $R$ be a local ring such that $d=\depth(R)$. If $d\geq 1$, then $\Omega_R^{i}(k)$ is indecomposable for all $i=0, \ldots, d-1$.
\end{chunk}

\begin{proof}[Proof of part (1) of Theorem~\ref{intro-thm-1}]
We first assume $R$ is a one-dimensional regular local ring and suppose there is a full-trace $R$-module $M$. Then $\tr_R(M)=\fm$. This implies that $\tr_R(M)=R$ and gives a contradiction due to Lemma \ref{lemmatrace} because $\fm \cong R$. Consequently, there is no full-trace $R$-module.
This proves the theorem for the case where $R$ is a one-dimensional regular local ring.

Next, assume $R$ is a $d$-dimensional regular local ring with $d\geq 2$. Then $\Omega_R^{i}(k)=0$ for all $i\geq d+1$ so that $\tr_R\big(\Omega_R^{i}(k)\big)=0$. Similarly, $\tr_R\big(\Omega_R^{d}(k)\big)=R$ since $\Omega_R^{d}(k)$ is free. Therefore, $\Omega_R^{i}(k)$ is not a full-trace $R$-module for all $i\geq d$. Next, we consider $\Omega_R^{i}(k)$ for all $i=1, \ldots, d-1$.

Let $\underline{x}$ be a regular system of parameters. Then the Koszul complex $\Ko(\underline{x};R)$ yields a minimal free resolution of $R/\underline{x}R=k$. Let $\partial_i$ be the $i$th differential of the complex $\Ko(\underline{x};R)$. Then Lemma \ref{l24} implies that $I(\partial_i) \subseteq \tr_R\big(\im (\partial_i)\big)$, that is, $\fm=(\underline{x})=I(\partial_i) \subseteq \tr_R\big(\im (\partial_i)\big)=\tr_R\big(\Omega_R^{i}(k)\big)$
for all $i=1, \ldots, d-1$. We know that $\Omega_R^{i}(k)$ has no free direct summand for all $i=1, \ldots, d-1$; see \ref{Takahashi}. Therefore, $\Omega_R^{i}(k)$ is a full-trace $R$-module for all $i=1, \ldots, d-1$; see \ref{gozlemle-2}(iii). This completes the proof of the first part of Theorem \ref{intro-thm-1}.
 \end{proof}
 

The proof of the third part of Theorem~\ref{intro-thm-1} is similar to that of the first part but relies on a construction of Gulliksen \cite{Gu} and Tate \cite{Tate} concerning the minimal free resolution of the residue field. Before proving the theorem, we briefly recall relevant details of their construction; see \cite[Sect. 1 and 2]{Tate}. 

\begin{chunk} (\cite{Tate}) An $R$-algebra $X$ is an associative $R$-algebra  equipped with a grading $X=\bigoplus_{n\in \mathbb{Z}} X_n$ and an $R$-module homomorphism $\partial: X\to X$ such that $X_i$ is a finitely generated $R$-module for all $i\ge 0$, $X_0 = R$, $X_i=0$ for all $i<0$ and $X$ is a skew derivation of degree $-1$ which is strictly skew-commutative (this means $\partial(X_n) \subseteq X_{n-1}$ for all $n\in \mathbb{Z}$ and $\partial^2 = 0$. Moreover, given $x \in X_{v}$ and $y \in X_{w}$, it follows that $xy=(-1)^{vw}yx$, $\partial(xy) = \partial(x) y +(-1)^v x \partial(y)$, and $x^2=0$ if $v$ is odd). 

Note that the ring $R$ can be viewed as an $R$-algebra in the trivial sense. Another typical example of such an algebra is the Koszul complex. 

An $R$-algebra $(X, \partial)$ can be regarded as a complex of $R$-modules:
\[
\cdots \to X_n \xrightarrow{\partial_n} X_{n-1} \xrightarrow{\partial_{n-1}} \cdots \to X_1  \xrightarrow{\partial_{1}} X_0 \to 0. 
\]

Given an $R$-algebra  $(X, \partial)$ and a cycle $s\in Z_n=\ker(\partial_n)$, we denote by $X\langle S; \partial(S)=s\rangle$ the extended $R$-algebra obtained by the adjunction of a variable $S$ to kill the cycle $s$. We skip the details of the construction of the extended algebra, but present the form of the extended complex constructed depending on $\deg(s)$, where $\deg(S)=\deg(s)+1$.
\vspace{0.05in}

If $\deg(s)$ is even, the complex $X\langle S; \partial(S)=s\rangle$ can be regarded as:
\[
\cdots \to 
\begin{matrix}
X_{n} \\ \oplus \\ X_{n-p}
\end{matrix}
\xrightarrow{\left(
\begin{smallmatrix}
\partial_n & *\\
0 & \partial_{n-p}
\end{smallmatrix}\right)} 
\begin{matrix}
X_{n-1} \\ \oplus \\ X_{n-1-p} 
\end{matrix}
\to \cdots \to 
\begin{matrix}
X_p \\
\oplus \\
X_0 
\end{matrix}
\to X_{p-1} \to \cdots \to X_0 \to 0.
\]

\vspace{0.05in}

If $\deg(s)$ is odd, the complex $X\langle S; \partial(S)=s\rangle$ obtained from $X$ can be regarded as:
\[
\cdots \to 
\begin{matrix}
X_{n} \\ \oplus \\ X_{n-p} \\ \oplus \\ X_{n-2p} \\ \oplus \\ \vdots 
\end{matrix}
\xrightarrow{\left(
\begin{smallmatrix}
\partial_n & * & 0 & \cdots \\
0 & \partial_{n-p} & * & \ddots \\
0 & 0 & \partial_{n-2p} & \ddots \\
\vdots & \vdots & \ddots & \ddots  
\end{smallmatrix}\right)} 
\begin{matrix}
X_{n-1} \\ \oplus \\ X_{n-1-p} \\ \oplus \\ X_{n-1-2p} \\ \oplus \\ \vdots 
\end{matrix}
\to \cdots \to 
\begin{matrix}
X_p \\
\oplus \\
X_0  
\end{matrix}
\to X_{p-1} \to \cdots \to X_0 \to 0.
\]
(Note that the direct sum $X_n\oplus X_{n-p} \oplus X_{n-2p} \oplus \cdots$ is a finite sum since $X_{n-\ell p} =0$ when $\ell p>n$.)
\end{chunk}

Next is the proof of the third part of Theorem \ref{intro-thm-1}.

\begin{proof}[Proof of part (3) of Theorem \ref{intro-thm-1}] Assume $R$ is a local ring such that $R$ is not regular and not a principal ideal ring. Let $\underline{x}=\{x_1, \ldots, x_n\}$ be a minimal generating set of the maximal ideal $\fm$ and let $(K, \partial)$ be the Koszul complex on $\underline{x}$. Note that $n\geq 2$ since $R$ is not a principal ideal ring. Note also that $I(\partial_1)=\fm=I(\partial_2)$.

If $H_1(K)=0$, then $\underline{x}$ is a regular sequence so that $R$ is regular; see \cite[1.6.19, 2.2.5]{BH}. Therefore, $H_1(K)\neq 0$. Hence, we proceed by obtaining the extended algebras by killing the elements in $H_1(K)$ represented by the cycles. Given such a cycle $s_{n+1}$, as its degree is two, the algebra $K\langle S; \partial(S_{n+1})=s_{n+1}\rangle$ extended from $K$ can be regarded as follows:

\[
\cdots \to 
\begin{matrix}
K_{n} \\ \oplus \\ K_{n-2} \\ \oplus \\ K_{n-4} \\ \oplus \\ \vdots 
\end{matrix}
\xrightarrow{\left(
\begin{smallmatrix}
\partial_n' & * & 0 & \cdots \\
0 & \partial_{n-2}' & * & \ddots \\
0 & 0 & \partial_{n-4}' & \ddots \\
\vdots & \vdots & \ddots & \ddots  
\end{smallmatrix}\right)} 
\begin{matrix}
K_{n-1} \\ \oplus \\ K_{n-3} \\ \oplus \\ K_{n-5} \\ \oplus \\ \vdots 
\end{matrix}
\to \cdots \to 
\begin{matrix}
R^{\binom{n}{2}}\\
\oplus \\
R 
\end{matrix}
\to R^{\oplus n} \to R \to 0. 
\]

In this way we obtain an extended algebra $Y=K\langle S; \partial(S_{n+1})=s_{n+1}, \ldots, \partial(S_{n+r})=s_{n+r}\rangle$ for some $r\geq 1$ with $H_1(Y)=0$.
Subsequently, we continue killing the cycles in $H_2(Y)$. Eventaully, by \cite[Thm. 1]{Tate} and \cite[Thm.]{Gu}, we obtain a minimal free resolution $K\langle \ldots, S_i, \ldots; \partial(S_{i})=s_{i}\rangle$ of $k$ with differential $\varphi$ such that  $\varphi_1=\partial_1$, $I(\partial_2)\subseteq I(\varphi_2)$, and $I(\varphi_i)$ contains $I(\partial_1)$ or $I(\partial_2)$ for all $i\geq 3$ (this process may require infinitely many steps.) So, in view of Lemma \ref{l24}, the following inclusions hold for all $i\geq 1$:
$$\fm \subseteq I(\varphi_i) \subseteq \tr_R\big(\im (\varphi_i)\big) \subseteq \tr_R\big( \Omega_R^{i}(k)\big)$$
Thus, we conclude from \ref{gozlemle-2}(iii) that $\tr_R\big( \Omega_R^{i}(k)\big)=\fm$ for all $i\geq 1$.
\end{proof}

\section{Modules which are both full-trace and Ulrich}\label{FU}

Let \( R \) be a Cohen-Macaulay local ring and let \( M \) be a maximal Cohen-Macaulay \( R \)-module, that is, \( M \neq 0 \) and \( \depth_R(M) = \dim_R(M) = \dim(R) \). Then \( \mu_R(M) = \len_R(M / \fm M) \leq \e_R(M) \), where \( \e_R(M) \) denotes the (Hilbert-Samuel) multiplicity of \( M \) (with respect to \( \fm \)); see, for example, \cite[page 26]{UlrichMZ}. Brennan, Herzog, and Ulrich \cite{BHU} and Ulrich \cite{UlrichMZ} studied maximal Cohen-Macaulay modules that are maximally generated, namely maximal Cohen-Macaulay \( R \)-modules \( M \) such that \( \mu_R(M) = \e_R(M) \). These modules are now known as \emph{Ulrich modules} in the literature.

The theory of Ulrich modules is an active area of research, and these modules have been shown to have important connections with several conjectures and open problems in commutative algebra and algebraic geometry. Although Ulrich modules do not always exist, their existence has significant implications for the homological properties of the base ring and its modules; see \cite{Farrah}. A result of particular interest to us in this paper is due to Brennan--Herzog--Ulrich~\cite{BHU}, which states that if \( R \) has minimal multiplicity, that is, if \( e(R) = \mu_R(M) - \dim(R) + 1 \), then \( \Omega^i_R(k) \) is an Ulrich \( R \)-module for all \( i \geq \dim(R) \).

Recall that Theorem~\ref{intro-thm-1} shows that \( \Omega^i_R(k) \) is a full-trace \( R \)-module in many cases. Motivated by the aforementioned result of Brennan--Herzog--Ulrich, we investigate a possible connection between minimal multiplicity and the property of being both full-trace and Ulrich. Our main results in this direction are as follows: If \( R \) has minimal multiplicity, then \( \Omega^i_R(k) \) is both full-trace and Ulrich for all \( i \geq \dim(R) \). Conversely, if \( R \) is not regular, the existence of a full-trace Ulrich \( R \)-module implies that \( R \) has minimal multiplicity. These results are established in Proposition~\ref{prop-min-mul} and Corollary~\ref{cor-equivalent}.

For the reader’s convenience, we recall some basic facts in Sections~\ref{Ulrichmul-pre}, \ref{Ulrichsyz}, \ref{Ulrichsyz2}, and~\ref{Ulrichmul}.

\begin{chunk}\label{Ulrichmul-pre} Let $R$ be a local ring, $\fq$ be an $\fm$-primary ideal of $R$, and let $M$ be a an $R$-module. Then,
\begin{enumerate}[\rm(i)] 
\item If $\fq$ is a parameter ideal, then $\e_R(\fq, M)\leq \len_R(M/\fq M)$; see \cite[14.9]{Mat}.
\item If $M$ is Cohen-Macaulay, then $\e_R(\fq, M)\geq \len_R(M/\fq M)$; see \cite[proof of 11.1.10]{HunekeSwanson}. 
\item If $\fq$ is a reduction of $\fm$, then $\e_R(\fq, M)=\e_R(M)$; see \cite[4.6.5]{BH}
\end{enumerate}
\end{chunk}
 
\begin{chunk} \label{Ulrichsyz}  \label{redvspar} Let $R$ be a Cohen-Macaulay local ring.
\begin{enumerate}[\rm(i)]
\item If $|k|=\infty$, there is a minimal reduction of $\fm$, which is necessarily a parameter ideal of $R$; see \cite[8.3.5, 8.3.7, and 8.3.9]{HunekeSwanson}. 
\item If $R$ is an Ulrich $R$-module, then $R$ is a regular ring. 
\item If $|k|=\infty$, then $R$ has minimal multiplicity if and only if $\fm^2=\fq \fm$ for some parameter ideal $\fq$ of $\fm$.
\item It follows that $\fm$ is an Ulrich $R$-module if and only if $\fm$ is a maximal Cohen-Macaulay $R$-module such that $\e(R)=\mu_R(\fm)$.
\item If $\dim(R)=1$, then $\fm$ is an Ulrich $R$-module if and only if $R$ has minimal multiplicity; see part (iv).
\end{enumerate}
\end{chunk}

\begin{chunk} \label{Ulrichsyz2} Let $R$ be a Cohen-Macaulay local ring which has minimal multiplicity. 
\begin{enumerate}[\rm(i)]
\item Assume $R$ is Artinian. Then $\fm^2=0$; see \ref{Ulrichsyz}(ii). Thus, for all $n\geq 1$, it follows that $\Omega_R^{n}(k)$ is a finite-dimensional $k$-vector space, and hence an Ulrich $R$-module by \cite[1.2]{BHU}.
\item The $R$-module $\Omega_R^{n}(k)$ is Ulrich for all $n\geq \dim(R)$. This follows from \cite[2.5]{BHU} if $R$ is not Artinian, and from part (i) if $R$ is Artinian.
\item If $R$ is a non-regular principal ideal ring, then $R$ is Artinian and hence part (i) shows that $\Omega_R^{n}(k)$ is an Ulrich $R$-module for all $n\geq 1$.
 \end{enumerate}
\end{chunk}

\begin{chunk}\label{Ulrichmul} Let $R$ be a Cohen-Macaulay local ring, $\fq$ be a parameter ideal of $R$, and let $M$ be a Cohen-Macaulay $R$-module. Assume $\fq$ is a reduction of $\fm$. Then we have the following implications: $M$ is Ulrich if and only if $\len_R(M/ \fm M)=\mu_R(M)=\e_R(M)=\len_R(M/\fq M)$ if and only if $\fm M=\fq M$; see \ref{Ulrichmul-pre}.
\end{chunk}

\begin{lem} \label{Ulrichlem1}Let $R$ be a Cohen-Macaulay local ring and let $M$ be an Ulrich $R$-module. If $\fq$ is a parameter ideal of $R$ which is a reduction of $\fm$, then $\fm \tr_R(M)=\fq \tr_R(M)$.
\end{lem}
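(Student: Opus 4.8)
The plan is to reduce the assertion to the standard characterization of Ulrich modules, namely that $\fm M = \fq M$ (recorded in \ref{Ulrichmul}), and then transport this equality along a surjection onto the trace ideal supplied by Lemma~\ref{lemmatrace}(i).

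First I would invoke \ref{Ulrichmul}: since $M$ is an Ulrich $R$-module (in particular a Cohen--Macaulay $R$-module) and $\fq$ is a parameter ideal of $R$ which is a reduction of $\fm$, we obtain $\fm M = \fq M$. Taking finite direct sums, this yields $\fm\, M^{\oplus n} = \fq\, M^{\oplus n}$ for every integer $n \geq 1$.

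Next I would apply Lemma~\ref{lemmatrace}(i) to produce a surjective $R$-module homomorphism $\pi \colon M^{\oplus n} \twoheadrightarrow \tr_R(M)$ for some $n \geq 1$. For any $R$-module homomorphism $\pi$, any $R$-module $N$, and any ideal $I$ of $R$, one has $\pi(IN) = I\,\pi(N)$; applying this with $N = M^{\oplus n}$, first with $I = \fm$ and then with $I = \fq$, and using that $\pi$ is onto $\tr_R(M)$, gives $\fm \tr_R(M) = \pi\big(\fm\, M^{\oplus n}\big)$ and $\fq \tr_R(M) = \pi\big(\fq\, M^{\oplus n}\big)$. Combining these two identities with the equality $\fm\, M^{\oplus n} = \fq\, M^{\oplus n}$ from the previous paragraph forces $\fm \tr_R(M) = \fq \tr_R(M)$, which is exactly the claim.

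I do not expect a genuine obstacle here: the argument is entirely formal once the right inputs are identified. The only points needing a modicum of care are the use of Lemma~\ref{lemmatrace}(i) to realize $\tr_R(M)$ as a homomorphic image of a finite power of $M$, and the elementary fact that homomorphic images commute with multiplication by an ideal; the Ulrich hypothesis enters solely through the equality $\fm M = \fq M$.
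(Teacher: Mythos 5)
Your proof is correct and uses exactly the same two ingredients as the paper: the surjection $M^{\oplus n} \twoheadrightarrow \tr_R(M)$ from Lemma~\ref{lemmatrace}(i) and the equality $\fm M = \fq M$ from \ref{Ulrichmul}. The only difference is cosmetic: the paper reduces the surjection modulo $\fq$ and compares annihilators to deduce $\fm \subseteq \Ann_R\bigl(\tr_R(M)/\fq\,\tr_R(M)\bigr)$, whereas you push the equality $\fm M^{\oplus n} = \fq M^{\oplus n}$ directly forward through the surjection using $\pi(IN) = I\,\pi(N)$; your version is marginally more direct, but the argument is essentially the same.
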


\begin{proof} Note that there exists a surjection $M^{\oplus n} \twoheadrightarrow \tr_R(M)$ for some $n\geq 1$; see Lemma \ref{lemmatrace}(i). Tensoring this surjection with $R/\fq$, we obtain another surjection $(M/\fq M)^{\oplus n}  \twoheadrightarrow  \tr_R(M)/\fq \tr_R(M)$. Therefore, $\Ann_R\big(M/\fq M\big)=\Ann_R\big((M/\fq M)^{\oplus n}\big) \subseteq \Ann_R\big( \tr_R(M)/\fq \tr_R(M)\big)$. Recall that  $\fq M=\fm M$ due to \ref{Ulrichmul}. So, $\Ann_R(M/\fq M)=\Ann_R(M/\fm M)=\fm$.
Consequently, $\fm \subseteq \Ann_R\big(\tr_R(M)/\fq \tr_R(M)\big)$. This implies that $\fm \tr_R(M)=\fq \tr_R(M)$, as required.
\end{proof}

The next observation follows from the fact stated in \ref{Ulrichmul} and Lemma \ref{Ulrichlem1}.

\begin{cor} Let $R$ be a one-dimensional Cohen-Macaulay local ring. Then the trace of each Ulrich $R$-module is also an Ulrich $R$-module. 
\end{cor}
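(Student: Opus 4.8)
The plan is to verify that $\tr_R(M)$ is a nonzero maximal Cohen--Macaulay ideal of $R$ and then to read off the Ulrich property from the reduction equality of Lemma~\ref{Ulrichlem1} via the criterion recorded in~\ref{Ulrichmul}. Fix an Ulrich $R$-module $M$ and put $N=\tr_R(M)$. Since the formation of the trace commutes with the faithfully flat base change $R\to R[t]_{\fm R[t]}$, and this extension preserves multiplicity, depth, and minimal number of generators, hence the Cohen--Macaulay and Ulrich properties, I may assume $|k|=\infty$; then \ref{redvspar}(i) supplies a parameter ideal $\fq$ of $R$ that is a reduction of $\fm$.

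First I would show $N\neq 0$. Since $M$ is maximal Cohen--Macaulay, $\dim_R M=\dim R=1$, so $\Supp_R M$ contains a one-dimensional prime of $R$, which is necessarily a minimal prime $\fp$; as minimal primes of $R$ are associated, $\fp\in\Ass R\cap\Supp_R M$, and this forces $\Hom_R(M,R)\neq 0$ (indeed $\Hom_R(M,R)_\fp=\Hom_{R_\fp}(M_\fp,R_\fp)$ is nonzero because $R_\fp$ has depth zero and $M_\fp\neq 0$), hence $N\neq 0$. Next I would check that $N$ is maximal Cohen--Macaulay: being a nonzero ideal of $R$, we have $\Ass_R N\subseteq\Ass R$, and since $R$ is one-dimensional Cohen--Macaulay every associated prime of $R$ is minimal, so $\fm\notin\Ass_R N$ and $\depth_R N\geq 1$; as $\dim_R N\leq 1$, this gives $\depth_R N=\dim_R N=1=\dim R$.

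Finally, since $M$ is Ulrich, Lemma~\ref{Ulrichlem1} yields $\fm N=\fq N$. As $N$ is Cohen--Macaulay and $\fq$ is a parameter ideal that is a reduction of $\fm$, the equivalence in~\ref{Ulrichmul} shows that $N=\tr_R(M)$ is an Ulrich $R$-module, as asserted. The one point requiring genuine attention is establishing that $\tr_R(M)$ is nonzero and (maximal) Cohen--Macaulay, since this is exactly what makes~\ref{Ulrichmul} applicable; once that is in hand, the corollary drops out immediately from Lemma~\ref{Ulrichlem1} and~\ref{Ulrichmul}.
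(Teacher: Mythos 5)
Your proof is correct and follows exactly the route the paper intends: the paper's "proof" is a one-line remark that the statement "follows from \ref{Ulrichmul} and Lemma \ref{Ulrichlem1}," and your argument makes that precise by first reducing to $|k|=\infty$, then checking that $\tr_R(M)$ is a nonzero maximal Cohen--Macaulay ideal (so that the criterion \ref{Ulrichmul} is actually applicable), and finally citing Lemma~\ref{Ulrichlem1} to get $\fm\tr_R(M)=\fq\tr_R(M)$. The only addition you make over the paper's implicit argument is the careful verification that $N=\tr_R(M)$ is nonzero and Cohen--Macaulay, which is indeed a hypothesis of \ref{Ulrichmul} that the paper silently glosses over; your check (via $\Ass_R N\subseteq\Ass R$ consisting of minimal primes in a one-dimensional CM ring, and $M^{\ast}\neq 0$ because $M$ is supported at a minimal, hence associated, prime where $R$ localizes to an Artinian ring) is the standard way to do it and is sound.
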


\begin{defn} Let $R$ be a Cohen-Macaulay local ring and let $M$ be an $R$-module. We say $M$ is \emph{full-trace Ulrich} if $M$ is an Ulrich $R$-module such that $\tr_R(M)=\fm$. 
\end{defn}

\begin{ex} \label{cor-end-1} Let $R$ be a one-dimensional non-regular Cohen-Macaulay local ring. Assume $R$ has minimal multiplicity. Then $\fm^2=x\fm$ for some $x\in \fm$. This implies that $\fm^{n} \cong \fm$ for all $n\geq 1$, and hence $\fm^{n}$ is full-trace for all $n\geq 1$. Set $e=\e(R)$. Then $\fm^{e-1}$ is an Ulrich $R$-module \cite[2.1]{BHU}. Note that $\fm$ has no free direct summand; see \ref{gozlemle-2}(i). Consequently,  $\fm \oplus \fm^{e-1}$  is a full-trace Ulrich $R$-module; see \ref{gozlemle-1}(ii) 
\end{ex}

\begin{prop} \label{prop-min-mul} Let $R$ be a Cohen-Macaulay local ring. If there exists a full-trace Ulrich $R$-module, then $R$ has minimal multiplicity. 
\end{prop}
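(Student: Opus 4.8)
The plan is to leverage Lemma~\ref{Ulrichlem1} together with the characterization of minimal multiplicity in \ref{Ulrichsyz}(iii). Let $M$ be a full-trace Ulrich $R$-module, so that $M$ is Ulrich and $\tr_R(M)=\fm$. The core of the argument is short: applying Lemma~\ref{Ulrichlem1} to a parameter-ideal reduction $\fq$ of $\fm$ gives $\fm\tr_R(M)=\fq\tr_R(M)$, and substituting $\tr_R(M)=\fm$ yields $\fm^2=\fq\fm$, which by \ref{Ulrichsyz}(iii) means that $R$ has minimal multiplicity. The one gap in this one-line argument is that \ref{Ulrichsyz}(i) and \ref{Ulrichsyz}(iii) both require the residue field to be infinite, so a preliminary reduction to that case is needed.

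Accordingly, I would first pass to $R':=R[X]_{\fm R[X]}$, a faithfully flat local extension of $R$ with infinite residue field, with maximal ideal $\fm'=\fm R'$, and with $\dim R'=\dim R$, $\embdim R'=\embdim R$, and $\e(R')=\e(R)$; in particular $R'$ has minimal multiplicity if and only if $R$ does, so it suffices to prove the statement over $R'$. One then checks that $M':=M\otimes_R R'$ is again full-trace Ulrich over $R'$: it is maximal Cohen--Macaulay with $\mu_{R'}(M')=\mu_R(M)$ and $\e_{R'}(M')=\e_R(M)$, hence Ulrich; and since $M$ is finitely presented and $R\to R'$ is flat, $\Hom_R(M,R)$ base-changes to $\Hom_{R'}(M',R')$, so $\tr_{R'}(M')=\tr_R(M)R'=\fm R'=\fm'$.

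Having reduced to the case $|k|=\infty$, by \ref{Ulrichsyz}(i) the maximal ideal $\fm$ admits a minimal reduction $\fq$ which is a parameter ideal of $R$. Since $M$ is Ulrich, Lemma~\ref{Ulrichlem1} applies and gives $\fm\tr_R(M)=\fq\tr_R(M)$; as $\tr_R(M)=\fm$, this reads $\fm^2=\fq\fm$. Then \ref{Ulrichsyz}(iii) shows that $R$ has minimal multiplicity, which completes the proof.

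I expect the only real obstacle to be the bookkeeping in the residue-field reduction---namely verifying that \emph{full-trace} and \emph{Ulrich} are inherited by $M'$ and that \emph{minimal multiplicity} is a faithfully-flat-local condition for this particular extension. Both facts are standard (trace commutes with flat base change because $\Hom$ does for finitely presented modules, and all the relevant numerical invariants are unchanged by adjoining an indeterminate and localizing at $\fm$), but they should be recorded rather than skipped. Once the residue field is infinite, the argument is immediate from Lemma~\ref{Ulrichlem1} and \ref{Ulrichsyz}.
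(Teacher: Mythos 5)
Your proposal is correct and follows essentially the same approach as the paper's proof: reduce to $|k|=\infty$ via the faithfully flat extension $R[X]_{\fm R[X]}$, apply Lemma~\ref{Ulrichlem1} to a minimal reduction $\fq$ of $\fm$ to get $\fm^2=\fq\fm$, and invoke \ref{Ulrichsyz}(iii). The only difference is that you spell out the base-change bookkeeping in more detail, while the paper handles it with a citation to Lindo.
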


\begin{proof} Note that, under the faithfully flat extension $R\to R[x]_{\fm R[x]}$, where $R[x]$ denotes the polynomial ring over $R$, the multiplicity and the full-trace Ulrich property are preserved; see \cite[2.8(viii)]{Lindo1}. Hence, we may pass to the ring $R[x]_{\fm R[x]}$ and assume $|k|=\infty$. Thus, we can choose a minimal reduction $\fq$ of $\fm$, which is necessarily a parameter ideal of $R$; see \ref{redvspar}(i).

Let $M$ be a full-trace Ulrich $R$-module. Then Lemma \ref{Ulrichlem1} shows that $\fm^2=\fm \tr_R(M)=\fq \tr_R(M)=\fq \fm$. Thus, $R$ has minimal multiplicity; see \ref{Ulrichsyz}(iii).
\end{proof}

The next corollary is  a consequence of Theorem \ref{intro-thm-1}, Proposition \ref{prop-min-mul}, and \cite[2.5]{BHU}.

\begin{cor} \label{cor-equivalent} Let $R$ be a $d$-dimensional non-regular Cohen-Macaulay local ring. Then the following conditions are equivalent.
\begin{enumerate}[\rm(i)] 
\item $R$ has minimal multiplicity. 
\item $\Omega^{i}_R(k)$ is a full-trace Ulrich $R$-module for all $i\geq d$.
\item $\Omega^{i}_R(k)$ is a full-trace Ulrich $R$-module for some $i\geq d$.
\item There exists a full-trace Ulrich $R$-module.
\end{enumerate}
\end{cor}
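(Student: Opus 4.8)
The plan is to prove Corollary~\ref{cor-equivalent} by establishing the cycle of implications $(i)\Rightarrow(ii)\Rightarrow(iii)\Rightarrow(iv)\Rightarrow(i)$, drawing on the results already available in the excerpt. The only genuinely new work is the step $(i)\Rightarrow(ii)$; the remaining implications are either trivial or immediate from earlier results.

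\begin{proof}
$(i)\Rightarrow(ii)$: Assume $R$ has minimal multiplicity. By \ref{Ulrichsyz2}(ii), $\Omega^i_R(k)$ is an Ulrich $R$-module for all $i\geq d$. It remains to show each such $\Omega^i_R(k)$ is full-trace, i.e.\ that $\tr_R\big(\Omega^i_R(k)\big)=\fm$. Since $R$ is not regular, $\Omega^i_R(k)$ has no free direct summand by \ref{gozlemle-2}(i), so by \ref{gozlemle-2}(ii) it suffices to prove $\fm\subseteq\tr_R\big(\Omega^i_R(k)\big)$. For this I would argue exactly as in the proof of part (1) of Theorem~\ref{intro-thm-1}: after the faithfully flat base change $R\to R[x]_{\fm R[x]}$ (which preserves minimal multiplicity, the syzygy modules of $k$, the full-trace property, and the Ulrich property; see \cite[2.8(viii)]{Lindo1}) we may assume $|k|=\infty$ and pick a minimal reduction $\fq=(\underline{y})$ of $\fm$, a parameter ideal. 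By \ref{Ulrichsyz}(iii) we have $\fm^2=\fq\fm$. Since $\Omega^i_R(k)$ is Ulrich for $i\geq d$, \ref{Ulrichmul} gives $\fm\,\Omega^i_R(k)=\fq\,\Omega^i_R(k)$, so the differential $\varphi_{i+1}\colon F_{i+1}\to F_i$ in the minimal free resolution of $k$, which lands in $\fm F_i$, in fact satisfies $\im(\varphi_{i+1})\subseteq\fq F_i$ modulo the relations defining $\Omega^{i}$; more directly, the key point is that $\Omega^i_R(k)$ being Ulrich forces $\fm\,\Omega^i_R(k)=\fq\,\Omega^i_R(k)$, and combining this with the surjection $\big(\Omega^i_R(k)\big)^{\oplus n}\twoheadrightarrow\tr_R\big(\Omega^i_R(k)\big)$ from Lemma~\ref{lemmatrace}(i) shows $\fm\,\tr_R\big(\Omega^i_R(k)\big)=\fq\,\tr_R\big(\Omega^i_R(k)\big)$ as in Lemma~\ref{Ulrichlem1}. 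To get $\fm\subseteq\tr_R\big(\Omega^i_R(k)\big)$ itself, I would instead invoke Lemma~\ref{l24}: by \cite[2.5]{BHU} and the construction underlying \cite[Thm.~1]{Tate}, \cite[Thm.]{Gu} used in the proof of part (3) of Theorem~\ref{intro-thm-1}, the $i$th differential $\varphi_i$ of the minimal resolution of $k$ has $I(\varphi_i)\supseteq\fm$ for $i\geq 1$ whenever $\embdim R\geq 2$, while the case $\embdim R\leq 1$ is covered by Proposition~\ref{mainthm-1} together with \ref{Ulrichsyz2}(iii); hence $\fm\subseteq I(\varphi_i)\subseteq\tr_R\big(\im\varphi_i\big)=\tr_R\big(\Omega^i_R(k)\big)$. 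Either way, by \ref{gozlemle-2}(iii), $\Omega^i_R(k)$ is full-trace Ulrich for all $i\geq d$.

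$(ii)\Rightarrow(iii)$: Immediate, since $d\geq 0$ guarantees there is at least one index $i\geq d$ (and for $d=0$ every $\Omega^i_R(k)$ with $i\geq 0$ is covered; note $R$ non-regular Artinian has $\fm\neq 0$ by minimal multiplicity considerations, so $\Omega^i_R(k)\neq 0$).

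$(iii)\Rightarrow(iv)$: Trivial: a full-trace Ulrich $\Omega^i_R(k)$ is in particular a full-trace Ulrich $R$-module.

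$(iv)\Rightarrow(i)$: This is precisely Proposition~\ref{prop-min-mul}.
\end{proof}

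The main obstacle is the honest content of $(i)\Rightarrow(ii)$, specifically the claim that $\fm\subseteq\tr_R\big(\Omega^i_R(k)\big)$ for $i\geq d$. One must be careful that the Gulliksen--Tate construction genuinely forces every differential $\varphi_i$ (for $i\geq 1$) to have $I(\varphi_i)\supseteq\fm$ in the non-principal case, and handle the principal-ideal-ring case separately via Proposition~\ref{mainthm-1}; the Ulrich hypothesis then merely rules out a free summand (via non-regularity and \ref{gozlemle-2}) so that the inclusion $\fm\subseteq\tr_R\big(\Omega^i_R(k)\big)$ can be upgraded to equality. Everything else is bookkeeping across the earlier numbered facts.
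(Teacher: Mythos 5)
Your cycle of implications matches the paper's, and the substance of each step is sound; the only meaningful work is in $(i)\Rightarrow(ii)$, and there your argument ultimately lands on the same ingredients the paper uses: \ref{Ulrichsyz2}(ii) (i.e.\ \cite[2.5]{BHU}) for the Ulrich property, and the Gulliksen--Tate differentials (Lemma~\ref{l24}) plus Proposition~\ref{mainthm-1} for the full-trace property. The paper dispatches $(i)\Rightarrow(ii)$ in one line simply by citing Theorem~\ref{intro-thm-1} together with \ref{Ulrichsyz2}, whereas you effectively re-prove the full-trace half of Theorem~\ref{intro-thm-1} inline.

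Two remarks on streamlining. First, the detour through Lemma~\ref{Ulrichlem1} is a dead end, as you yourself note: $\fm\,\tr_R(M)=\fq\,\tr_R(M)$ gives no purchase on the inclusion $\fm\subseteq\tr_R(M)$, so the passage about ``$\im(\varphi_{i+1})\subseteq\fq F_i$ modulo the relations'' and the base change to $R[x]_{\fm R[x]}$ can be deleted entirely; they play no role in the argument you actually complete. Second, rather than rerunning the Tate--Gulliksen construction, you can simply invoke Theorem~\ref{intro-thm-1} directly: part (3) covers $\embdim R\geq 2$, and for $\embdim R=1$ (a non-regular principal ideal ring) the hypothesis of minimal multiplicity forces $R$ to be Artinian with $\fm^2=0$ by \ref{Ulrichsyz2}(i), so part (2)(i) applies and the obstruction in (2)(ii) never arises. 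That last observation, that the problematic case $\fm^2\neq 0$ of Theorem~\ref{intro-thm-1}(2) is excluded under minimal multiplicity, is the one genuinely non-automatic point, and your proof does make it, just via a slightly roundabout citation of \ref{Ulrichsyz2}(iii) rather than \ref{Ulrichsyz2}(i). With those trims, your argument coincides with the paper's.
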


\begin{proof} The implication (i) $\Longrightarrow$ (ii) follows from \ref{Ulrichsyz2}, and the implication (iv) $\Longrightarrow$ (i) follows from Proposition \ref{prop-min-mul}. As the implications (ii) $\Longrightarrow$ (iii) $\Longrightarrow$ (iv) are trivial, the corollary holds.
\end{proof}

\begin{rem} In the literature there are important classes of rings which have minimal multiplicity. For example, each (local and one-dimensional) Arf ring has minimal multiplicity; see \cite[1.10 and 2.2]{Lipman}. Therefore, if $R$ is an Arf ring (for example, $R=k[\![t^e, t^{e+1}, \ldots, t^{2e-1}]\!]$ for some $e\geq 2$), then $\Omega_R^{n}(k)$ is full-trace Ulrich $R$-module for all $n\geq 1$; see Corollary \ref{cor-equivalent} and \cite[4.7]{Arf1}.
\end{rem}

We denote by $\rmQ$ the \emph{total ring of fractions} of $R$. Moreover, if $R$ is local, we denote by $\rmE$ the \emph{endomorphism algebra} $\Hom_R(\fm, \fm)$ of $\fm$. Note that, if $\depth(R)\geq 1$, then there is an isomorphism of rings $\beta: (\mathfrak{m} :_{\rmQ} \mathfrak{m}) \cong \rmE$, where $\beta(z)(x)=zx$ for all $z\in (\mathfrak{m} :_{\rmQ} \mathfrak{m})$ and $x\in \rmE$.

\begin{chunk} \label{endo} Let $R$ be a one-dimensional Cohen-Macaulay local ring which has minimal multiplicity. It follows from \cite[1.11]{Lipman} that there is a minimal reduction $(a)$ of $\fm$ (which is a parameter ideal of $R$) such that $\fm^2=a\fm$ and 
\[
\mathfrak{m} \overset{\alpha}{\cong} \frac{\mathfrak{m}}{a} 
= (\mathfrak{m} :_{\rmQ} \mathfrak{m}) 
\overset{\beta}{\cong}  \rmE, \;\; \text{ where } \frac{\mathfrak{m}}{a} = \left\{ \frac{x}{a} : x \in \mathfrak{m} \right\} \subseteq \rmQ, \; \text{ and } \alpha(y)=\frac{y}{a} \; \text{ for all } y\in \fm.
\]
\end{chunk}

\begin{rem} \label{endo2} Let $R$ be a local ring. 
\begin{enumerate}[\rm(i)]
\item Let $M$ be an $R$-module and let $f \in M^{\ast}$. If $\im(f) \nsubseteq \fm$, then $\im(f)=R$ so that the exact sequence $0 \to \ker(f) \to M  \xrightarrow{f} R \to 0$ splits, and hence $R$ is a direct summand of $M$.
\item Let $M$ be an Ulrich $R$-module. If $R$ is a direct summand of $M$, then $R$ is an Ulrich $R$-module since $M$ is Ulrich so that $\e(R)=1$, that is, $R$ is regular. Thus, if $R$ is not regular and $f\in M^{\ast}$, then $\im(f) \subseteq \fm$; see part (i). Consequently, if $R$ is not regular, then $M^{\ast}=\Hom_R(M, \fm)$.
\item Assume $R$ is not regular. Then $\fm$ does not have a free summand; see \ref{gozlemle-2}(i). So, dualizing the exact sequence $0 \to \fm \to R \to k \to 0$, and using part (ii), we obtain an injection $R \hookrightarrow \fm^{\ast}=\End_R(\fm)=\rmE$. This implies that every $\rmE$-module also has an $R$-module structure.
\end{enumerate}
\end{rem}

\begin{lem} \label{mainlem} Let $R$ be a one-dimensional non-regular Cohen-Macaulay local ring which has minimal multiplicity and let $M$ be an Ulrich $R$-module.
Then $M$ has an $\rmE$-module structure, and there is a minimal reduction $(a)$ of $\fm$ such that 
\[
M^{\ast} \cong a \cdot \Hom_{R}(M,\rmE), \;\; \Hom_{R}(M,\rmE)=\Hom_{\rmE}(M,\rmE), \;\text{ and } \; \tr_R(M)= a \cdot \tr_{\rmE}(M).
\]
\end{lem}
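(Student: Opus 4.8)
I want to prove three things about an Ulrich module $M$ over a one-dimensional non-regular Cohen--Macaulay local ring $R$ with minimal multiplicity: first that $M$ carries an $\rmE$-module structure, then that $\Hom_R(M,\rmE)=\Hom_\rmE(M,\rmE)$ and $M^\ast\cong a\cdot\Hom_R(M,\rmE)$, and finally the trace identity $\tr_R(M)=a\cdot\tr_\rmE(M)$. The starting point is \ref{endo}: there is a minimal reduction $(a)$ of $\fm$ with $\fm^2=a\fm$ and a chain of isomorphisms $\fm\overset{\alpha}{\cong}\fm/a=(\fm:_\rmQ\fm)\overset{\beta}{\cong}\rmE$, where $\alpha(y)=y/a$. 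The $\rmE$-module structure on $M$ will come from \ref{Ulrichmul}: since $M$ is Ulrich and $(a)$ is a reduction of $\fm$, we have $\fm M=aM$, so multiplication by $a$ is surjective on $M$, and in fact $a$ is a nonzerodivisor on $M$ (as $M$ is a torsion-free/maximal Cohen--Macaulay module over a one-dimensional ring and $a$ is a parameter), hence multiplication by $a$ is an isomorphism $M\xrightarrow{\sim}aM=\fm M$. This lets me define, for $z=x/a\in(\fm:_\rmQ\fm)$ and $m\in M$, the element $z\cdot m:=(a^{-1}\text{ applied to }xm)$; I must check this is well defined (if $x/a=x'/a$ then $x=x'$ since $a$ is a nonzerodivisor in $\rmQ$) and makes $M$ a module over $(\fm:_\rmQ\fm)\cong\rmE$ extending the $R$-action. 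Concretely $M\subseteq\rmQ\otimes_R M$ and the action is the one inherited from $\rmQ\otimes_R M$ via $(\fm:_\rmQ\fm)\subseteq\rmQ$; one checks $M$ is stable under this action precisely because $\fm M=aM$.

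\medskip
\noindent\textbf{The $\Hom$ identities.} For $\Hom_R(M,\rmE)=\Hom_\rmE(M,\rmE)$: an $R$-linear map $g\colon M\to\rmE$ is automatically $\rmE$-linear, because $\rmE\cong(\fm:_\rmQ\fm)$ sits inside $\rmQ$ and the $\rmE$-action on both $M$ and $\rmE$ is the restriction of the $\rmQ$-action on $\rmQ\otimes_R M$ and on $\rmQ$ respectively; explicitly, for $z\in\rmE$, $g(zm)=a^{-1}g(xm)=a^{-1}xg(m)=zg(m)$ using that $zm$ was defined via $a^{-1}(xm)$ and $g$ is $R$-linear so $a\cdot g(zm)=g(a\cdot zm)=g(xm)=xg(m)$. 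For $M^\ast\cong a\cdot\Hom_R(M,\rmE)$: by \ref{endo2}(ii), since $R$ is not regular and $M$ is Ulrich, $M^\ast=\Hom_R(M,\fm)$; now compose with the isomorphism $\alpha\colon\fm\xrightarrow{\sim}\fm/a\overset{\beta}\cong\rmE$ to get $\Hom_R(M,\fm)\cong\Hom_R(M,\rmE)$, and track where the image lands — a homomorphism $f\colon M\to\fm$ corresponds to $\alpha\circ f\colon m\mapsto f(m)/a$, so running this backwards, an element $g\in\Hom_R(M,\rmE)=\Hom_R(M,\fm/a)$ comes from $f$ with $f(m)=a\cdot g(m)$, i.e.\ from $a\cdot g$ thought of inside $\Hom_R(M,\rmQ\otimes_R M)$ or after identifying via these maps. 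I should phrase this as: under the identification $M^\ast=\Hom_R(M,\fm)$ and $\Hom_R(M,\rmE)=\Hom_R(M,\fm/a)=a^{-1}\Hom_R(M,\fm)$ inside $\Hom_R(M,\rmQ)$, we get $M^\ast=\Hom_R(M,\fm)=a\cdot(a^{-1}\Hom_R(M,\fm))=a\cdot\Hom_R(M,\rmE)$.

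\medskip
\noindent\textbf{The trace identity.} Once the previous parts are in place this should be formal. By definition $\tr_R(M)=\sum_{f\in M^\ast}\im(f)$, and $\tr_\rmE(M)=\sum_{g\in\Hom_\rmE(M,\rmE)}\im(g)=\sum_{g\in\Hom_R(M,\rmE)}\im(g)$ using the $\Hom$ identity. Since $M^\ast=a\cdot\Hom_R(M,\rmE)$ as submodules of $\Hom_R(M,\rmQ\otimes_R M)$ (or of $\rmQ$-valued homs), each $f\in M^\ast$ is $a\cdot g$ for a unique $g\in\Hom_R(M,\rmE)$, so $\im(f)=a\cdot\im(g)$, and summing gives $\tr_R(M)=\sum_f\im(f)=a\cdot\sum_g\im(g)=a\cdot\tr_\rmE(M)$. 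I need only be careful that multiplication by $a$ commutes with taking images and sums of submodules inside $\rmQ$, which is clear since $a$ is a nonzerodivisor.

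\medskip
\noindent\textbf{Main obstacle.} The real care is in setting up the $\rmE$-module structure and, more importantly, in making the identifications of $\Hom$ groups live inside a common ambient object ($\rmQ$-valued or $\rmQ\otimes_R M$-valued homomorphisms) so that statements like ``$M^\ast = a\cdot\Hom_R(M,\rmE)$'' and ``$\tr_R(M)=a\cdot\tr_\rmE(M)$'' are literal equalities of submodules of $\rmQ$ rather than mere abstract isomorphisms — otherwise the factor of $a$ is not canonically meaningful. Everything else (that $a$ is a nonzerodivisor on $M$; that $R$-linear maps into $\rmE$ are $\rmE$-linear; that $M^\ast=\Hom_R(M,\fm)$) is either standard or quoted from \ref{endo}, \ref{endo2}, and \ref{Ulrichmul}. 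So the proof is mostly bookkeeping with the inclusion $R\subseteq\rmE\subseteq\rmQ$ and the identification $\fm\cong\rmE$ given by dividing by $a$.
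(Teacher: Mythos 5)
Your proposal is correct and follows essentially the same route as the paper's proof: both extract the minimal reduction $(a)$ and the identification $\fm\cong\fm/a\cong\rmE$ from \ref{endo}, use $\fm M=aM$ (from the Ulrich property and \ref{Ulrichmul}/\ref{Ulrichlem1}) to endow $M$ with an $\rmE$-module structure, invoke \ref{endo2}(ii) to identify $M^\ast=\Hom_R(M,\fm)$ and then ``divide by $a$'' to land in $\Hom_R(M,\rmE)$, prove the $\rmE$-linearity of $R$-linear maps into $\rmE$ by the same $avf(x)=bf(x)=f(bx)=f(avx)=af(vx)$ calculation, and finally push the $a$ through the sum of images to get the trace identity. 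Your added emphasis on realizing everything as actual submodules of $\rmQ$ (so that $\tr_R(M)=a\cdot\tr_\rmE(M)$ is a literal equality of ideals rather than an abstract isomorphism) is a point the paper handles implicitly through its explicit map $f\mapsto a\psi$ with $\psi(y)=f(y)/a$, so it is the same argument, just made more explicit.
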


\begin{proof} It follows from \ref{endo} that there is a minimal reduction $(a)$ of $\fm$. Then, since $M$ is Ulrich, we have that $\fm M= a M$; see Lemma \ref{Ulrichlem1}.
As $\displaystyle{\rmE \cong \frac{\mathfrak{m}}{a}}$, it follows that $\displaystyle{\rmE M \cong \frac{\mathfrak{m}}{a} M= \frac{aM}{a}=M}$, and hence $M$ has an $\displaystyle{\frac{\fm}{a}}$, or equivalently, $\rmE$-module structure.

Note that $M^{\ast}=\Hom_R(M,\fm)$ as $R$ is not regular; see Remark \ref{endo2}(ii). So, we have an isomorphism
\[
M^{\ast}=\Hom_R(M,\fm) \overset{\cong}{\longrightarrow} a \cdot \Hom_R\left(M, \frac{\fm}{a}\right) \text{ given by } f \mapsto a \psi, \text{ where } \psi: M \to \frac{\fm}{a} \text{ and } \psi(y)=\frac{f(y)}{a}.
\]
Also, since $\displaystyle{\frac{\mathfrak{m}}{a}\overset{\beta}{\cong}  \rmE}$, there is an isomorphism $\displaystyle{a \cdot \Hom_R\left(M, \frac{\fm}{a}\right) \cong a \cdot \Hom_R\left(M, E\right)}$; see \ref{endo}. Therefore, we conclude that 
$\displaystyle{M^{\ast} \cong a \cdot \Hom_R\left(M, \frac{\fm}{a}\right)  \cong a \cdot \Hom_{R}(M,\rmE)}$.

Next we observe $\Hom_{R}(M,\rmE)=\Hom_{\rmE}(M,\rmE)$. We already have that $\Hom_{\rm E} \left(M, \rmE \right) \subseteq \Hom_R\left(M, \rmE \right)$ since $R$ is not regular; see \ref{endo2}(iii). Let $f\in \Hom_R\left(M, \rmE \right)$. To show $f$ is $\rmE$-linear, it is enough to show that $f$ is $\displaystyle{\frac{\fm}{a}}$-linear. For that, set $\displaystyle{v=\frac{b}{a} \in \frac{\fm}{a}}$ for some $b\in \fm$. Then, given $x\in M$, we have 
\[
avf(x)=bf(x)=f(bx)=f(avx)=af(vx), \; \text{ or equivalently, } \; vf(x)=f(vx), 
\]
where the second and fourth equalities hold since $f$ is $R$-linear (Note that $vx \in M$ and $a$ is a non zero-divisor on $E$ since it is a torsion-free $R$-module.) This proves that $f$ is $\rmE$-linear, and establishes the equality $\Hom_{R}(M,\rmE)=\Hom_{E}(M,\rmE)$.

Now, setting $M^{\vee}=\Hom_R(M,E)$, we see that the following equalities hold:
\[
a \cdot \tr_E(M)
= a \cdot \sum_{h \; \in \; M^{\vee}} \im(h)
= \sum_{\mathclap{\psi \; \in \; a \cdot M^{\vee}}} \im(\psi)
= \sum_{\gamma \; \in \; M^{\ast}} \im(\gamma)
= \tr_R(M).
\]
Note that, here the third equality holds since $M^{\ast} \cong a \cdot M^{\vee}$.
\end{proof}

As the endomorphism algebra of a numerical semigroup ring is local, Proposition \ref{cor-end-3-intro} advertised in the introduction is subsumed by the next result.

\begin{prop} \label{2ndthm} Let \( R \) be a one-dimensional non-regular Cohen--Macaulay local ring and let \( M \) be a full-trace Ulrich \( R \)-module. Then there is an isomorphism of $R$-modules
\[
M^{\oplus n} \cong \fm \oplus N
\]
for some \( n \geq 1 \) and some $E$-module \( N \), where $N=0$ or $N$ is an Ulrich $R$-module. Moreover, if the endomorphism algebra $E$ of $\fm$ is a local ring, one can assume $n=1$ in the direct sum decomposition so that $\fm$ is a direct summand of $M$ as an $R$-module.
\end{prop}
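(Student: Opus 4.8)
The plan is to push the whole question into the endomorphism algebra $\rmE=\End_R(\fm)$, where the trace of $M$ becomes the unit ideal, and then apply Lemma~\ref{lemmatrace} there.

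First I would observe that, since $M$ is full-trace Ulrich, Proposition~\ref{prop-min-mul} forces $R$ to have minimal multiplicity, so \ref{endo} and Lemma~\ref{mainlem} are available. Fix the minimal reduction $(a)$ of $\fm$ supplied by \ref{endo}; inside $\rmQ$ one has $\rmE\cong\fm/a$ and $a\cdot(\fm/a)=\fm$, hence $a\rmE=\fm$, and $\fm\cong\rmE$ as $R$-modules. Lemma~\ref{mainlem} endows $M$ with an $\rmE$-module structure and gives the identity $\tr_R(M)=a\cdot\tr_{\rmE}(M)$. Since $M$ is full-trace the left side equals $\fm=a\rmE$, and cancelling the non-zerodivisor $a$ (a unit in $\rmQ$) yields $\tr_{\rmE}(M)=\rmE$; that is, the trace ideal of $M$ computed over $\rmE$ is all of $\rmE$.

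Next I would apply Lemma~\ref{lemmatrace} over the ring $\rmE$, which is Noetherian since it is module-finite over $R$. Part~(ii), applied to the $\rmE$-module $M$, produces an isomorphism of $\rmE$-modules $M^{\oplus n}\cong\rmE\oplus N$ for some $n\ge 1$ and some $\rmE$-module $N$; restricting scalars along the inclusion $R\hookrightarrow\rmE$ of \ref{endo2}(iii) and using $\rmE\cong\fm$ as $R$-modules turns this into the asserted $R$-module isomorphism $M^{\oplus n}\cong\fm\oplus N$. If in addition $\rmE$ is local, then Lemma~\ref{lemmatrace}(iii) applies instead and gives $M\cong\rmE\oplus T$ over $\rmE$, hence $M\cong\fm\oplus T$ over $R$ with $\fm$ a genuine $R$-module direct summand of $M$; this is the case $n=1$.

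Finally I would check that the complementary summand is zero or Ulrich. Being Ulrich, $M$ satisfies $\fm M=aM$ (see \ref{Ulrichmul}, with $\fq=(a)$) and $\depth_R M=1$, so $\fm M^{\oplus n}=aM^{\oplus n}$ and $\depth_R M^{\oplus n}=1$. Transporting these through $M^{\oplus n}\cong\fm\oplus N$ and intersecting the equal submodules $\fm^2\oplus\fm N$ and $a\fm\oplus aN$ of $\fm\oplus N$ with the second summand gives $\fm N=aN$, while the depth formula for a direct sum forces $\depth_R N\ge 1$; hence, if $N\ne 0$, then $N$ is maximal Cohen-Macaulay, and then $N$ is Ulrich by \ref{Ulrichmul}. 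The identical computation with $n=1$ disposes of $T$ in the local case. The one delicate point is the first step: one must confirm that the ideal $\tr_{\rmE}(M)$ appearing in Lemma~\ref{mainlem} is genuinely the trace of $M$ as an $\rmE$-module, so that Lemma~\ref{lemmatrace} can be invoked verbatim with $\rmE$ in place of $R$; beyond that bookkeeping there is no real obstacle.
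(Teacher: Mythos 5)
Your proposal is correct and follows essentially the same route as the paper: pass to $\rmE$, use Lemma~\ref{mainlem} and \ref{endo} to deduce $\tr_{\rmE}(M)=\rmE$, invoke Lemma~\ref{lemmatrace}(ii) (resp.\ (iii) when $\rmE$ is local) over $\rmE$, and then restrict scalars via $\rmE\cong\fm$. The only difference is that you spell out explicitly why the complementary summand $N$ is zero or Ulrich (via $\fm N=aN$ and the depth of a direct summand), a verification the paper's proof leaves implicit in the phrase ``the claims follow.''
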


\begin{proof} Assume $M$ is a full-trace $R$-module. Then $R$ has minimal multiplicity due to Proposition \ref{prop-min-mul}. As $\tr_R(M)=\fm$, it follows from \ref{endo} and Lemma \ref{mainlem} that there is a minimal reduction $(a)$ of $\fm$ such that 
\[
E \cong \frac{\fm}{a}= \frac{\tr_R(M)}{a}= \frac{a \cdot \tr_{\rmE}(M)}{a}=\tr_{\rmE}(M).
\]
Now the claims follow from Lemma \ref{lemmatrace}(ii).
\end{proof}

We conclude this section with Example~\ref{sonornek}, which demonstrates that the conclusion of Proposition~\ref{2ndthm} may fail over higher-dimensional Cohen--Macaulay local rings. The following observation will be used in the example.

\begin{rem} \label{e-a} Let $R$ be a local ring such that $\depth(R)\geq 2$. Then, since $\Ext^1_R(k,R)=0$, applying $\Hom_R(-,R)$ to the short exact sequence $0 \to \fm \to R \to k \to 0$, we see that $R\cong \Hom_R(\fm, R) \cong (R:_{\rmQ}\fm)$. Also, since $\dim(R)\geq 2$, it follows that $(R:_{\rmQ} \fm)= (\fm:_{\rmQ} \fm)$. Thus, $R\cong (\fm:_{\rmQ}\fm) \cong \End_R(\fm)=E$.
\end{rem}

\begin{ex} \label{sonornek} Let $R=k[\![x^2, xy, y^2]\!]$, the second Veronese subring of $S=k[\![x,y]\!]$. Then $S=R \oplus M$, where $M=(x,y)$ is the $R$-submodule of $S$ generated by $x$ and $y$. It follows that $R\cong k[\![x,y,z]\!]/(xy-z^2)$, a two-dimensional hypersurface domain of minimal multiplicity two, where $E=\End_R(\fm)$ is a local ring; see Remark \ref{e-a}. Also, $R$ has finite Cohen-Macaulay type and all the indecomposable maximal Cohen-Macaulay $R$-modules, up to isomorphism, are $R$-direct summands of $S$, namely $R$ and $M$; see \cite[6.2]{BLW}. Note that $M$ corresponds to the ideal generated by $x^2$ and $xy$ in $R$, and $\e_R(R)=\e(M)=\mu_R(M)$. Hence, $M$ is an Ulrich $R$-module. 

Next consider the $R$-module homomorphisms $f:M=(x,y) \rightarrow R$ and $g:M=(x,y) \rightarrow R$, where $f(\alpha)=x\alpha$ and $g(\alpha)=y\alpha$ for all $\alpha \in M$. Then $\im(f)+\im(g)=(x^2, xy)+(xy, y^2)=\fm \subseteq \tr_R(M)$. As $M$ does not have a free summand (it is indecomposable), we conclude that $\tr_R(M)=\fm$; see \ref{gozlemle-2}(iii). Therefore, $M$ is a full-trace $R$-module. 

If $M\cong \fm \oplus N$ for some $R$-module $N$, then $N=0$ since $M$ is indecomposable; however this cannot hold since $\mu_R(M)=2$ and $\mu_R(\fm)=3$. Consequently, $M$ is a full-trace Ulrich $R$-module which does not have a decomposition as in Proposition \ref{2ndthm}.
\end{ex}

\section*{acknowledgements}
Part of this work was completed when Kumashiro visited the University of Duisburg-Essen and the West Virginia University in August 2023, and when Herzog visited the West Virginia University in April 2023.

O.\@ Celikbas is grateful to the Max Planck Institute for Mathematics in Bonn for its hospitality and financial support. E.\@ Celikbas is likewise thankful for the Institute's hospitality.

Kumashiro was supported by JSPS Kakenhi Grant Number JP24K16909.

\end{document}